\newtheorem{theorem}{Theorem}[section]
\newtheorem{question}[theorem]{Question}
\theoremstyle{definition} \theoremstyle{remark}
\numberwithin{equation}{section}
\newcommand{\ve}{\varepsilon}
\newcommand{\fun}[3]{#1 \colon #2 \to #3}
\newcommand{\m}[1]{\mathbb{#1}}
\newcommand{\diam}{\mathrm{diam\,}}
\newcommand{\ord}{\mathrm{ord}}
\newcommand{\cl}[2][X]{\mathrm{cl}_{#1}\!\left(#2\right)}
\newcommand{\inte}[2][X]{\mathrm{int}_{#1}\!\left(#2\right)}
\newcommand{\bd}[2][X]{\mathrm{bd}_{#1}\!\left(#2\right)}
\begin{document}

\title{Dendrites and symmetric products}

\author[Acosta]{Gerardo Acosta}
\address[G. Acosta]{Instituto de Matem\'aticas, Universidad Nacional
Aut\'onoma de M\'exico, Ciudad Universitaria, M\'{e}xico D.F., 04510, M\'exico}
\email{gacosta@matem.unam.mx}

\author[Hern\'andez-Guti\'errez]{Rodrigo Hern\'andez-Guti\'errez}
\address[R. Hern\'andez-Guti\'errez]{Instituto de Matem\'aticas, Universidad Nacional
Aut\'onoma de M\'exico, Ciudad Universitaria, M\'{e}xico D.F., 04510, M\'exico}
\email{rod@matem.unam.mx}

\author[Mart\'{\i}nez-de-la-Vega]{Ver\'onica Mart\'{\i}nez-de-la-Vega}
\address[V. Mart\'{\i}nez-de-la-Vega]{Instituto de Matem\'aticas, Universidad Nacional
Aut\'onoma de M\'exico, Ciudad Universitaria, M\'{e}xico D.F., 04510, M\'exico}
\email{vmvm@matem.unam.mx}

\date{August 15, 2008}
\subjclass[2000]{54B20, 54C15, 54F15, 54F50}
\keywords{Continuum, contractibility, dendrite, finite graph, unique hyperspace}
\begin{abstract}
For a given continuum $X$ and a natural number $n,$ we consider the
hyperspace $F_n(X)$ of all nonempty subsets of $X$ with at most $n$ points,
metrized by the Hausdorff metric. In this paper we show that if
$X$ is a dendrite whose set of end points is closed, $n \in \m{N}$ and
$Y$ is a continuum such that the hyperspaces $F_n(X)$ and $F_n(Y)$ are
homeomorphic, then $Y$ is a dendrite whose set of end points is closed.
\end{abstract}
\maketitle

\section{Introduction}

A \emph{continuum} is a nondegenerate, compact, connected metric
space. Let $\m{N}$ represent the set of positive integers. 
For a given continuum $X$ and $n \in \m{N},$ we consider the
following hyperspaces of $X$:
$$
F_n(X) = \{A \subset X \colon A \mbox{ is nonempty and has at most } n
\mbox{ points}\}
$$
\noindent and
$$
C_n(X) = \{A \subset X \colon A \mbox{ is closed, nonempty and has at most }
n \mbox{ components}\}.
$$
We call $C_n(X)$ the \emph{$n$-fold hyperspace of} $X$ and $F_n(X)$ the
\emph{$n$-th symmetric product of} $X.$ Both $F_n(X)$ and $C_n(X)$ are metrized
by the Hausdorff metric $H$ (\cite[Definition 2.1]{Illanes-Nadler}).

If two continua $X$ and $Y$ are homeomorphic, we write $X \thickapprox Y.$ Note that
if $X$ and $Y$ are continua, then $X \thickapprox Y$ if and only if $F_1(X) \thickapprox
F_1(Y).$ Let $\mathcal{G}$ be a class of continua, $n \in \m{N}$ and $X \in \mathcal{G}.$
We say that $X$ has \emph{unique hyperspace $F_n(X)$ in $\mathcal{G}$} if whenever
$Y \in \mathcal{G}$ is such that $F_n(X) \thickapprox F_n(Y),$
it follows that $X \thickapprox Y.$ Similarly, $X$ has \emph{unique hyperspace $C_n(X)$
in $\mathcal{G}$} if whenever $Y \in \mathcal{G}$ is such that $C_n(X) \thickapprox
C_n(Y),$ we have $X \thickapprox Y.$ If $\mathcal{G}$ is the class of all
continua, we simply say that $X$ has unique hyperspace $F_n(X)$ or unique hyperspace
$C_n(X),$ respectively. Note that each continuum $X$ has unique hyperspace $F_1(X).$

A \emph{dendrite} is a locally connected continuum that contains no simple closed curves.
Throughout this paper we denote by $\mathcal{D}$ the class of dendrites whose set of
end points is closed. In \cite[Theorem 10]{david1} it is shown that if $X \in \mathcal{D}$
is not an arc, then $X$ has unique hyperspace $C_1(X).$ In \cite[Theorem 3]{illanes2} that
every $X \in \mathcal{D}$ has unique hyperspace $C_2(X),$ and in \cite[Theorem 5.7]{david3}
that each $X \in \mathcal{D}$ has also unique hyperspace $C_n(X),$ for $n \geq 3.$ In
\cite[Theorem 5.1]{acosta-david} it is proved that if  the set of end points of the dendrite $Y$ is not
closed, then $Y$ does not have unique hyperspace $C_1(Y)$ in the class of dendrites. This result is not
known for $n \geq 2.$ By \cite[Lemma 11]{Acosta2} an arc $Y$ has unique hyperspace $C_1(Y)$ in the
class of dendrites, but not in the class of all continua.

In the First Workshop in Hyperspaces and Continuum Theory, celebrated in the
city of Puebla, Mexico, July 2-13, 2007, the problem to determine if every element
$X \in \mathcal{D}$ has unique hyperspace $F_n(X)$ was asked by A. Illanes.
During the workshop, the three authors of this paper showed that if $X \in \mathcal{D},$
$n \in \m{N}$ and $Y$ is a continuum such that $F_n(X) \thickapprox F_n(Y),$
then $Y \in \mathcal{D}.$ This is the main result of this paper. In the same workshop,
D. Herrera-Carrasco, M. de J. L\'opez and F. Mac\'{\i}as-Romero proved that every element
$X \in \mathcal{D}$ has unique hyperspace $F_n(X)$ in $\mathcal{D}$ (\cite[Theorem 3.5]{david4}).
Combining these results it follows that every element $X \in \mathcal{D}$ has unique hyperspace $F_n(X).$ This
is a partial positive answer to the following problem, which remains open.

\begin{question}
Let $X$ be a dendrite and $n \in \m{N} - \{1\}.$ Does $X$ have unique hyperspace $F_n(X)$?
\end{question}

\section{General Notions and Facts}

All spaces considered in this paper are assumed to be metric. For a
space $X,$ a point $x \in X$ and a positive number $\ve,$ we denote
by $B_X(x,\ve)$ the open ball in $X$ centered at $x$ and having
radius $\ve.$ If $A$ is a subset of the space $X$, we use the symbols
$\cl{A}, \inte{A}$ and $\bd{A}$ to denote the closure, the interior
and the boundary of $A$ in $X,$ respectively. We denote the diameter
of $A$ by $\diam (A),$ and the cardinality of $A$ by $|A|.$ The letter $I$ stands
for the unit interval $[0,1]$ in the real line $\m{R}.$ 

A \emph{finite graph} is a continuum that can be written as the union of finitely
many arcs, each two of which intersect in a finite set. A \emph{tree} is a finite graph
that contains no simple closed curves.

If $X$ is a continuum, $U_1,U_2,\ldots,U_m \subset X$ and $n \in \m{N}$ we
define:
$$
\langle U_1, U_2, \ldots, U_m\rangle_n = \left\{A \in F_n(X) \colon A
\subset \bigcup_{i=1}^mU_i \mbox{ and } A \cap U_i \ne \emptyset
\mbox{ for each } i\right\}.
$$
It is known that the sets of the form $\langle U_1, U_2, \ldots,U_m\rangle_n,$ where $m \in \m{N}$
and $U_1,U_2,\ldots,U_m$ are open in $X,$ form a basis of the topology of $F_n(X),$ i.e.,
a basis for the topology induced by the Hausdorff metric $H$ on $F_n(X)$
(\cite[Theorems 1.2 and 3.1]{Illanes-Nadler}).

If $n \in \m{N},$ then an \emph{$n$-cell} is a space homeomorphic to the Cartesian product $I^n.$

\begin{theorem} \label{n-celda}
Let $X$ be a continuum and $n \in \m{N}.$ Given $i \in \{1,2,\ldots,n\}$ let
$J_i$ be an arc in $X$ with end points $a_i$ and $b_i.$ If the sets
$J_1,J_2,\ldots,J_n$ are pairwise disjoint, then $\langle J_1,J_2,\ldots,J_n\rangle_n$
is an $n$-cell in $F_n(X)$ whose manifold interior is the set
$\langle J_1 - \{a_1,b_1\},\ldots,J_n -\{a_n,b_n\}\rangle_n.$
\end{theorem}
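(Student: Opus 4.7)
The crucial first observation is that since the arcs $J_1,\ldots,J_n$ are pairwise disjoint, any $A\in\langle J_1,\ldots,J_n\rangle_n$ must pick up at least one point from each $J_i$, so $|A|\geq n$; combined with $|A|\leq n$, this forces $|A|=n$ and $|A\cap J_i|=1$ for every $i$. This single fact is the structural engine of the whole proof.

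Having this, I would build an explicit homeomorphism onto $I^n$ as follows. For each $i$, fix a homeomorphism $\fun{\varphi_i}{J_i}{I}$ sending $\{a_i,b_i\}$ onto $\{0,1\}$. Define
$$
\fun{\Phi}{\langle J_1,\ldots,J_n\rangle_n}{I^n},\qquad
\Phi(A) = (\varphi_1(x_1),\ldots,\varphi_n(x_n)),
$$
where $x_i$ is the unique point of $A\cap J_i$. By the observation, $\Phi$ is a well-defined bijection, with inverse $\Psi(t_1,\ldots,t_n)=\{\varphi_1^{-1}(t_1),\ldots,\varphi_n^{-1}(t_n)\}$.

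Next I would verify continuity of $\Phi$ and $\Psi$. For $\Psi$, continuity is immediate from the description of the basis $\langle U_1,\ldots,U_m\rangle_n$: small perturbations of the coordinates $t_i$ keep each $\varphi_i^{-1}(t_i)$ inside a prescribed open neighbourhood in $J_i$, hence the resulting $n$-point set stays in a prescribed basic open set. For $\Phi$, if $A_k\to A$ in the Hausdorff metric and $A$ has its $i$-th point $x_i\in J_i$, then eventually the unique point of $A_k\cap J_i$ lies in any preassigned neighbourhood of $x_i$ (using that $J_i$ is separated from the other arcs by positive distance and that $A_k\in\langle J_1,\ldots,J_n\rangle_n$); hence $\varphi_i$ applied to it converges to $\varphi_i(x_i)$. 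So $\Phi$ is a homeomorphism and $\langle J_1,\ldots,J_n\rangle_n$ is an $n$-cell.

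Finally, the manifold interior of this $n$-cell corresponds under $\Phi$ to $(0,1)^n$, i.e.\ to those $A$ for which $A\cap J_i$ avoids the end points $\{a_i,b_i\}$ for every $i$. Translating back, this is precisely $\langle J_1-\{a_1,b_1\},\ldots,J_n-\{a_n,b_n\}\rangle_n$. The only point requiring any care is the continuity argument for $\Phi$, and that is controlled by the positive pairwise distances $d(J_i,J_j)>0$, which force the ``assignment of a point to the correct arc'' to be stable under small Hausdorff perturbations.
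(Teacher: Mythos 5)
Your proposal is correct and is essentially the paper's own proof: the paper defines the inverse map $g(x_1,\ldots,x_n)=\{x_1,\ldots,x_n\}$ from $J_1\times J_2\times\cdots\times J_n$ onto $\langle J_1,\ldots,J_n\rangle_n$, observes it is a homeomorphism, and notes that it carries $\prod_{i=1}^n\left(J_i-\{a_i,b_i\}\right)$ onto the stated manifold interior; your $\Phi$ is just $g^{-1}$ composed with the coordinatewise identifications $J_i\cong I$. The only difference is that you spell out the bijectivity and continuity details (via the positive pairwise distances between the $J_i$) that the paper dismisses as ``easy to see.''
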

\begin{proof}
Given $(x_1,x_2,\ldots,x_n) \in J_1 \times J_2 \times \cdots \times J_n,$ let $g(x_1,x_2,\ldots,x_n) =
\{x_1,x_2,\ldots,x_n\}.$ It is easy to see that $\fun{g}{J_1 \times J_2 \times \cdots \times
J_n}{\langle J_1,J_2,\ldots,J_n\rangle_n}$ is a homeomorphism whose restriction to
$\prod_{i=1}^n\left(J_i - \{a_i,b_i\}\right)$ is a homeomorphism from $\prod_{i=1}^n\left(J_i - \{a_i,b_i\}\right)$
onto $\langle J_1 - \{a_1,b_1\},\ldots,J_n -\{a_n,b_n\}\rangle_n.$
\end{proof}

From now on, in this section, the letter $X$ represents a dendrite. For
properties of dendrites we refer the reader to \cite[Chapter 10]{Nadler}.
If $p \in X$ then by the \emph{order of $p$ in $X,$} denoted by $\ord_pX,$
we mean the Menger-Urysohn order (see \cite[Definition 9.3]{Nadler} and
\cite[(1.1), (iv), p. 88]{Whyburn}). We say that $p \in X$ is an \emph{end point of}
$X$ if $\ord_pX = 1.$ The set of all such points is denoted by $E(X).$ Let
$$
\begin{array}{lll}
E_a(X)  & = & \{p \in E(X) \colon \mbox{ there is a sequence in } E(X) - \{p\} \\
        &  &  \mbox{ that converges to } p\}.
\end{array}
$$
If $p \in E(X) - E_a(X)$ we call $p$ an \emph{isolated end point of} $X.$
If $\ord_pX = 2$ we say that $p$ is an \emph{ordinary point of} $X.$ The set of all
such points is denoted by $O(X).$ By \cite[Theorem 8, p. 302]{Kura}, $O(X)$ is dense
in $X.$ If $\ord_pX \geq 3,$ we say that $p$ is a \emph{ramification point of} $X.$
The set of all such points is denoted by $R(X).$

The following result is easy to prove.

\begin{theorem}\label{c-de-a}
Let $X$ be a dendrite and $n \in \m{N}.$ Assume that $A \in F_n(X)$ and
that $\mathcal{U}$ is a neighborhood of $A$ in $F_n(X).$ Then, for each
$k \in \m{N}$ with $|A| \leq k \leq n,$ there is $C \subset O(X)$ such that
$|C| = k$ and $C \in \mathcal{U}.$
\end{theorem}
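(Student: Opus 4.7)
The plan is to reduce to a basic open neighborhood of $A$ and then exploit the density of $O(X)$ in $X$. Write $A=\{a_1,\dots,a_j\}$ with $j=|A|\le k$. Because the sets $\langle U_1,\dots,U_m\rangle_n$ form a basis for the topology of $F_n(X)$, I would first choose pairwise disjoint open sets $V_1,\dots,V_j$ in $X$ with $a_i\in V_i$ for each $i$ and
$$\langle V_1,\dots,V_j\rangle_n\subset\mathcal{U}.$$
Pairwise disjointness is possible because the $a_i$ are distinct, and once the $V_i$ are small enough and contain the $a_i$ one checks directly that every element of $\langle V_1,\dots,V_j\rangle_n$ lies close to $A$ in the Hausdorff metric.

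Next I would arrange for $k$ ordinary points inside these neighborhoods. Since $X$ is a nondegenerate continuum it has no isolated points, so the nonempty open set $V_1$ is infinite. I would therefore pick $k-j+1$ distinct points of $V_1$ and enclose them in pairwise disjoint open sets $W_1,\dots,W_{k-j+1}$ with $W_\ell\subset V_1$. By \cite[Theorem 8, p. 302]{Kura}, $O(X)$ is dense in $X$, so I can select $c_\ell\in W_\ell\cap O(X)$ for each $\ell\in\{1,\dots,k-j+1\}$ and $c_i\in V_i\cap O(X)$ for each $i\in\{2,\dots,j\}$.

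Finally, set
$$C=\{c_1,\dots,c_{k-j+1}\}\cup\{c_2,\dots,c_j\}.$$
The points $c_1,\dots,c_{k-j+1}$ are pairwise distinct because they lie in the disjoint sets $W_\ell$; they are distinct from $c_2,\dots,c_j$ because $V_1$ is disjoint from $V_2,\dots,V_j$; and $c_2,\dots,c_j$ are themselves distinct for the same reason. Hence $|C|=(k-j+1)+(j-1)=k$, every element of $C$ belongs to $O(X)$, $C\subset\bigcup_{i=1}^j V_i$, and $C\cap V_i\ne\emptyset$ for each $i$. Consequently $C\in\langle V_1,\dots,V_j\rangle_n\subset\mathcal{U}$, which is what we want.

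There is no real obstacle: the only delicate point is accommodating the extra $k-j$ points required when $k>|A|$, and this is handled by clustering them inside the single neighborhood $V_1$ using disjoint open subneighborhoods together with the density of $O(X)$.
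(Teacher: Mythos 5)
Your argument is correct and is exactly the standard one the authors have in mind: the paper omits the proof entirely (``The following result is easy to prove''), and your reduction to a basic open set $\langle V_1,\dots,V_j\rangle_n$ with pairwise disjoint $V_i$, followed by splitting $V_1$ into $k-j+1$ disjoint subneighborhoods and using that $X$ has no isolated points together with the density of $O(X)$, fills it in completely. The only cosmetic flaw is that the symbols $c_2,\dots$ are used simultaneously for the points chosen in the $W_\ell$ and for those chosen in the $V_i$; relabel one family to avoid the clash.
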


If $p,q \in X$ and $p \ne q,$ then there is only one arc in $X$ joining $p$ and $q.$
We denote such arc by $[p,q].$ We also consider the sets $(p,q) = [p,q] - \{p,q\},$
$[p,q) = [p,q] - \{q\}$ and $(p,q] = [p,q] - \{p\}.$ Let $[p,q]$ be an arc in $X$ such
that $(p,q) \subset O(X).$ We say that $[p,q]$ is:
\begin{enumerate}
\item[a)] \emph{internal} if $p,q \in R(X);$
\item[b)] \emph{external} if one end point of $[p,q]$ is an end point
          of $X,$ and the other end point of $[p,q]$ is a ramification
          point of $X.$
\end{enumerate}

Note that if $[p,q]$ is an internal arc in $X,$ then $\inte{[p,q]} =
(p,q).$ If $[p,q]$ is an external arc in $X$ and $p \in E(X),$ then
$\inte{[p,q]} = [p,q).$

Given $n \in \m{N},$ we consider the following subsets of $F_n(X)$:
$$
EA_n(X) =  \left\{A \in F_n(X) \colon A \cap E_a(X) \ne \emptyset\right\},
$$
$$
R_n(X)  = \left\{A \in F_n(X) \colon A \cap R(X) \ne \emptyset\right\}
$$
\noindent and
$$
\Lambda_n(X) = F_n(X) - \left(R_n(X) \cup EA_n(X)\right).
$$
Note that $A \in \Lambda_n(X)$ if and only if $A \in F_n(X)$ and
$A$ is contained in $O(X)\cup \left(E(X) - E_a(X)\right).$

\section{The class $\mathcal{D}$}

Recall that $\mathcal{D}$ is the class of all dendrites whose set of end points is
closed. Let $X \in \mathcal{D}.$ By \cite[Theorem 3.3]{Paty} the order of every point
of $X$ is finite. Let us assume that $s \in X$ is the limit of a sequence $(s_n)_n$ of
distinct ramification points of $X$ and that $s \ne s_1.$ By \cite[Proposition 3.4]{Paty}
$s$ is both the limit of a sequence of ramification points of $X,$ all in the arc
$[s,s_1],$ and the limit of a sequence of end points, all different than $s.$
Now assume that $e \in X$ is the limit of a sequence $(e_n)_n$ of
distinct end points of $X$ and that $e \ne e_1.$ Then $e$ is also the limit
of a sequence of ramification points of $X,$ all in the arc $[e,e_1].$
Hence $e \in E_a(X)$ if and only if $e$ is the limit of a sequence of
ramification points of $X.$

\begin{theorem} \label{por-grafica}
Let $X \in \mathcal{D}$ and $n \in \m{N}.$ If $A \in F_n(X) -
EA_n(X),$ then there exists a tree $T$ in $X$ such that:
\begin{enumerate}
\item[$(\ast)$] $A \subset \inte{T}$ and $T \cap E_a(X) = \emptyset.$
\end{enumerate}
\end{theorem}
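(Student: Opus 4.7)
The plan is to build $T$ by gluing together, for each $a \in A$, a small tree-like closed neighborhood $\cl{V_a}$ of $a$, together with the arcs of $X$ joining the points of $A$ pairwise. The structural input from $X \in \mcal{D}$ that drives the construction, already recorded in the paragraph preceding the theorem, is that $E(X)$ (hence $E_a(X)$) is closed and that no point outside $E_a(X)$ can be an accumulation point of $R(X)$ or of endpoints of $X$ other than itself.

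For each $a \in A$ we first construct an open connected neighborhood $V_a$ of $a$ in $X$ such that $\cl{V_a}$ is a tree and $\cl{V_a} \cap E_a(X) = \emptyset$. Since $\ord_a X$ is finite by \cite[Theorem 3.3]{Paty}, $a$ admits a neighborhood basis of connected open sets with finite boundary. Using the structural remark above we may shrink such a neighborhood so that in addition $\cl{V_a} \subset X \setminus E_a(X)$ and $\cl{V_a} \cap (R(X) \cup E(X)) \subset \{a\}$. Then every endpoint of the subdendrite $\cl{V_a}$ lies in the finite set $\bd{V_a} \cup \{a\}$, so $\cl{V_a}$ has finitely many endpoints and is therefore a tree.

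Having produced the $V_a$'s, we set $T = T_0 \cup \bigcup_{a \in A} \cl{V_a}$, where $T_0 = \bigcup_{a,a' \in A,\, a \neq a'} [a,a']$ (and we take $T = \cl{V_a}$ directly when $|A|=1$). Each arc $[a,a']$ with distinct $a,a' \in A$ meets $E(X)$ only at its endpoints, since an endpoint of $X$ has order $1$ and cannot be interior to an arc of $X$; hence $[a,a'] \cap E_a(X) \subset \{a,a'\} \cap E_a(X) = \emptyset$, which together with the previous step gives $T \cap E_a(X) = \emptyset$. The set $T$ is a finite union of arcs, hence a finite graph; it is connected because every $\cl{V_a}$ shares the point $a$ with $T_0$; and, being a finite graph contained in the dendrite $X$, it has no simple closed curve, so $T$ is a tree. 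Finally, $a \in V_a \subset T$ with $V_a$ open in $X$, so $A \subset \inte{T}$.

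The delicate step is the construction of $V_a$: one must ensure that a small closed connected neighborhood of $a$ is a tree rather than an arbitrary subdendrite. This is precisely where the hypothesis $X \in \mcal{D}$ is used in an essential way, through the observation that neither $R(X)$ nor $E(X) \setminus \{a\}$ accumulates at any point of $X - E_a(X)$; once this is in hand, the gluing in the last step is routine.
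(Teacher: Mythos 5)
Your argument is correct and is essentially the paper's: both proofs rest on the observation that a point outside $E_a(X)$ admits a small tree neighborhood (because neither $R(X)$ nor $E(X)\setminus\{a\}$ accumulates at such a point when $X \in \mathcal{D}$), and that the arcs joining the points of $A$ meet $E(X)$ only at points of $A$ and hence avoid $E_a(X)$. The paper merely packages this as an induction on $|A|$, building each local tree explicitly as a finite star of arcs $[p_i,x]$ into ordinary points of the components of $X-\{x\}$ rather than as the closure of a connected open set with finite boundary.
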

\begin{proof}
Let $A \in F_n(X) - EA_n(X).$ We proceed by induction over $|A|.$ If $|A| = 1,$ then $A = \{x\}.$
Let $k = \ord_{x} X.$ Since $X \in \mathcal{D},$ $k$ is finite, so $X - \{x\}$
has exactly $k$ components $C_1,C_2,\ldots,C_k.$ Since $x \notin E_a(X),$ for each
$i = \{1,2,\ldots,k\},$ there is $p_i \in O(X) \cap C_i$ such that if
$T = \bigcup_{i=1}^k[p_i,x],$ then $T - \{x\} \subset O(X).$ Hence $T$ is a tree in $X$
that satisfies $(\ast)$.

Now suppose that if $B \in F_n(X) - EA_n(X)$ contains $i$ points, with $i < n,$ then there is
a tree $G$ in $X$ that satisfies $(\ast)$, replacing $A$ by $B$ and $T$ by $G,$ respectively.
Assume that $|A| = i+1$ and let $A = \{x_1,x_2,\ldots,x_{i+1}\}.$
Let $T_1$ be a tree in $X$ such that $A - \left\{x_{i+1}\right\} \subset \inte{T_1}$ and $T_1 \cap E_a(X)
= \emptyset.$ By the first part of this proof, there exists a tree $T_2$ in $X$ such that
$x_{i+1} \in \inte{T_2}$ and $T_2 \cap E_a(X) = \emptyset.$ Thus $T = T_1 \cup T_2 \cup
[x_i,x_{i+1}]$ is a tree in $X$ that satisfies $(\ast).$
\end{proof}

\begin{theorem} \label{componentes}
Let $X \in \mathcal{D}$ and $m,n \in \m{N}$ so that $m \leq n.$ Let
$\mathcal{U} = \langle U_1,U_2,\ldots,U_m\rangle_n$ be an open subset of $F_n(X)$
such that:
\begin{enumerate}
\item[1)] $U_i$ is an open connected subset of $X,$ for each $i \in \{1,2,\ldots,m\};$
\item[2)] $U_i \cap U_j = \emptyset$ if $i,j \in \{1,2,\ldots,m\}$ and $i \ne j.$
\end{enumerate}
For each $i \in \{1,2,\ldots,m\},$ let $\{J^i_\alpha \colon \alpha \in \mathcal{A}_i\}$
be the set of components of $U_i \cap [O(X) \cup \left(E(X) - E_a(X)\right)].$ Then the
components of $\mathcal{U} \cap \Lambda_n(X)$ are the nonempty sets of the form:
$$
\left\langle J^{r_1}_{\alpha_1}, J^{r_2}_{\alpha_2}, \ldots,J^{r_k}_{\alpha_k}\right\rangle_n,
$$
\noindent where $\{r_1,r_2,\ldots,r_k\} = \{1,2,\ldots,m\},$ the sets
$J^{r_1}_{\alpha_1}, J^{r_2}_{\alpha_2}, \ldots,J^{r_k}_{\alpha_k}$ are pairwise different and
$\alpha_t \in \mathcal{A}_{r_t},$ for every $t \in \{1,2,\ldots,k\}.$
\end{theorem}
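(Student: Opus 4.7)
The plan is to show that the proposed sets $\mathcal{V} = \langle J^{r_1}_{\alpha_1}, \ldots, J^{r_k}_{\alpha_k}\rangle_n$ form a partition of $\mathcal{U} \cap \Lambda_n(X)$ into disjoint, clopen, connected pieces, whence they must coincide with the connected components.

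First I would check containment and coverage. Since $J^{r_t}_{\alpha_t} \subset U_{r_t} \cap [O(X) \cup (E(X)-E_a(X))]$ and $\{r_1,\ldots,r_k\} = \{1,\ldots,m\},$ every $A \in \mathcal{V}$ lies in $\mathcal{U},$ and being contained in the union of the $J$'s it avoids $R(X) \cup E_a(X),$ so $A \in \Lambda_n(X).$ Conversely, for $A \in \mathcal{U} \cap \Lambda_n(X)$ each point of $A$ lies in a unique $U_i$ (by pairwise disjointness) and in a unique component $J^i_\alpha;$ the requirement $A \cap U_i \ne \emptyset$ forces every index $i$ to appear, so $A$ belongs to exactly one proposed set, and distinct $\mathcal{V}$'s are automatically disjoint.

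The main technical step is proving each $\mathcal{V}$ is open in $\mathcal{U} \cap \Lambda_n(X).$ Given $A = \{a_1,\ldots,a_p\} \in \mathcal{V},$ I would build for each $a_j$ a connected open neighborhood $V_{a_j}$ of $a_j$ in $X$ with $V_{a_j} \subset U_{i(a_j)} \cap J^{i(a_j)}_{\alpha(a_j)}.$ If $a_j \in O(X),$ local connectedness of $X$ and $\ord_{a_j} X = 2$ yield a small open arc around $a_j$ lying entirely in $O(X).$ If $a_j$ is an isolated end point, then $X \in \mathcal{D}$ makes $\{a_j\}$ isolated in the closed set $E(X),$ and $R(X)$ cannot accumulate at $a_j$ (otherwise, by the discussion at the start of \S3, $a_j$ would be a limit of end points, contradicting $a_j \notin E_a(X)$); hence small enough connected open neighborhoods of $a_j$ avoid both $R(X)$ and $E(X) \setminus \{a_j\}$ and therefore lie in $O(X) \cup \{a_j\}.$ In each case $V_{a_j}$ is connected and sits in $U_i \cap [O(X) \cup (E(X)-E_a(X))],$ so by maximality of components it fits inside the single component $J^{i(a_j)}_{\alpha(a_j)}.$ Then $\mathcal{W} = \langle V_{a_1},\ldots,V_{a_p}\rangle_n$ is a basic open neighborhood of $A$ in $F_n(X);$ any $B \in \mathcal{W} \cap \Lambda_n(X)$ has each point inside some $V_{a_j} \subset J^{i(a_j)}_{\alpha(a_j)}$ and meets each $V_{a_j},$ so $B$ hits precisely the same collection of $J$'s as $A,$ giving $\mathcal{W} \cap \Lambda_n(X) \subset \mathcal{V}.$ The delicate point—and the main obstacle—is ensuring $V_{a_j}$ is trapped inside a single component, which rests on both local connectedness of $X$ and the structural consequences of $E(X)$ being closed.

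Finally, for connectedness of $\mathcal{V},$ each $J^{r_t}_{\alpha_t}$ is a connected subset of the dendrite $X$ containing no ramification point, so its points all have order at most $2;$ combined with arcwise connectedness of connected subsets of dendrites, this forces $J^{r_t}_{\alpha_t}$ to be topologically an interval, in particular path connected. Writing $A = A_1 \cup \cdots \cup A_k \in \mathcal{V}$ with $A_t = A \cap J^{r_t}_{\alpha_t},$ I would pick $a_t \in A_t$ and continuously contract $A_t$ inside the interval $J^{r_t}_{\alpha_t}$ to $\{a_t\},$ producing a path in $\mathcal{V}$ from $A$ to the $k$-point set $\{a_1,\ldots,a_k\};$ any two such $k$-point sets are joined by moving each coordinate along a path within its interval. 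Hence each $\mathcal{V}$ is path connected. Since the $\mathcal{V}$'s partition $\mathcal{U} \cap \Lambda_n(X)$ and each is open, each is also closed, so the clopen-and-connected $\mathcal{V}$'s are precisely the connected components.
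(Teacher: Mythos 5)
Your proposal is correct and follows essentially the same strategy as the paper: identify the sets $\left\langle J^{r_1}_{\alpha_1},\ldots,J^{r_k}_{\alpha_k}\right\rangle_n$ as a partition of $\mathcal{U}\cap\Lambda_n(X)$ into pairwise disjoint, open, connected pieces and conclude they are the components. The only difference is that you supply direct arguments (the pointwise neighborhood construction for openness, which amounts to showing each $J^i_\alpha$ is open in $X$, and the contraction-within-intervals argument for connectedness) where the paper simply asserts that each $J^i_\alpha$ is open and connected and cites a lemma of Mart\'{\i}nez-Montejano for the connectedness of $\left\langle J^{r_1}_{\alpha_1},\ldots,J^{r_k}_{\alpha_k}\right\rangle_n.$
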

\begin{proof}
It is easy to see that for each $i \in \{1,2,\ldots,m\}$ and every $\alpha \in \mathcal{A}_i,$
$J^i_\alpha$ is an open connected subset of $X.$ Let
$J^{r_1}_{\alpha_1}, J^{r_2}_{\alpha_2}, \ldots,J^{r_k}_{\alpha_k}$ be a finite collection of
pairwise different sets such that $\{r_1,r_2,\ldots,r_k\} = \{1,2,\ldots,m\}$ and
$\alpha_t \in \mathcal{A}_{r_t},$ for every $t \in \{1,2,\ldots,k\}.$ Since the sets
$J^{r_1}_{\alpha_1}, J^{r_2}_{\alpha_2}, \ldots,J^{r_k}_{\alpha_k}$ are open and connected, by
\cite[Lemma 1]{Jorge},
$$
\left\langle J^{r_1}_{\alpha_1}, J^{r_2}_{\alpha_2}, \ldots,J^{r_k}_{\alpha_k} \right\rangle_n
$$
\noindent is an open connected subset of $F_n(X).$ Let
$J^{s_1}_{\epsilon_1}, J^{s_2}_{\epsilon_2}, \ldots,J^{s_l}_{\epsilon_l}$ be a finite collection
of pairwise different sets such that: $\{s_1,s_2,\ldots,s_l\} = \{1,2,\ldots,m\},$
$\epsilon_v \in \mathcal{A}_{s_v},$ for every $v \in \{1,2,\ldots,l\},$ and
$$
\left\{J^{r_1}_{\alpha_1}, J^{r_2}_{\alpha_2}, \ldots,J^{r_k}_{\alpha_k}\right\} \ne
\left\{J^{s_1}_{\epsilon_1}, J^{s_2}_{\epsilon_2}, \ldots,J^{s_l}_{\epsilon_l}\right\}.
$$
It is not difficult to see that:
$$
\left\langle J^{r_1}_{\alpha_1}, J^{r_2}_{\alpha_2}, \ldots,J^{r_k}_{\alpha_k}\right\rangle_n \cap
\left\langle J^{s_1}_{\epsilon_1}, J^{s_2}_{\epsilon_2}, \ldots,J^{s_l}_{\epsilon_l}\right\rangle_n= \emptyset.
$$
Now assume that $\mathcal{C}$ is a component of $\mathcal{U} \cap \Lambda_n(X)$. Note that, for every
$A \in \mathcal{C}$, there is a unique finite collection
$$
J^{s_1}_{\sigma_1}, J^{s_2}_{\sigma_2}, \ldots,J^{s_w}_{\sigma_w}
$$
\noindent of pairwise different sets such that:  $\{s_1,s_2,\ldots,s_w\} = \{1,2,\ldots,m\},$
$\sigma_j \in \mathcal{A}_{s_j},$ for each $j \in \{1,2,\ldots,w\},$ and
$$
A \in \mathcal{V}_A =
\left\langle
J^{s_1}_{\sigma_1}, J^{s_2}_{\sigma_2}, \ldots,J^{s_w}_{\sigma_w}
\right\rangle_n.
$$

Hence $\mathcal{C} = \bigcup_{A\in\mathcal{C}}{\mathcal{V}_A}$, which expresses the open connected
set $\mathcal{C}$ as a union of nonempty pairwise disjoint open connected sets. Thus $\mathcal{C}$ is
of the form $\left\langle J^{r_1}_{\alpha_1}, J^{r_2}_{\alpha_2}, \ldots,J^{r_k}_{\alpha_k}\right\rangle_n$
where $\{r_1,r_2,\ldots,r_k\} = \{1,2,\ldots,m\},$ the sets
$J^{r_1}_{\alpha_1}, J^{r_2}_{\alpha_2}, \ldots,J^{r_k}_{\alpha_k}$ are pairwise different and
$\alpha_t \in \mathcal{A}_{r_t},$ for every $t \in \{1,2,\ldots,k\}.$
\end{proof}

Assume that $X \in \mathcal{D}.$ It is not difficult to see that $\Lambda_n(X)$ is an open subset
of $F_n(X).$  As a particular case of Theorem \ref{componentes} we obtain the following result,
which is the equivalent version of \cite[Lemma 4.1]{willy-alex} for elements of $\mathcal{D}.$

\begin{theorem} \label{componente}
Let $X \in \mathcal{D}$ such that $X$ is not an arc and $n \in \m{N}.$ Then the components of
$\Lambda_n(X)$ are exactly the sets of the form:
$$
\left\langle \inte{I_1}, \inte{I_2}, \ldots, \inte{I_m}\right\rangle_n,
$$
\noindent where $m \leq n,$ $I_j$ is either an internal or an external arc in $X$
for every $j \in \{1,2,\ldots,m\},$ and the sets $\inte{I_1},$ $\inte{I_2},\ldots,\inte{I_m}$
are pairwise disjoint.
\end{theorem}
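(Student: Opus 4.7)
The plan is to deduce \thmref{componente} from \thmref{componentes} by the specialization $m = 1$ and $U_1 = X$. With this choice $\mathcal{U} = \langle X \rangle_n = F_n(X)$; hypothesis (1) holds because $X$ is connected, hypothesis (2) is vacuous, and $\mathcal{U} \cap \Lambda_n(X) = \Lambda_n(X)$. Hence \thmref{componentes} describes the components of $\Lambda_n(X)$ as the nonempty sets $\langle J_{\alpha_1},\ldots,J_{\alpha_k}\rangle_n$ where the $J_\alpha$'s run over the connected components of $X - (R(X) \cup E_a(X))$, with $m \leq n$ because an element of $F_n(X)$ can meet at most $n$ pairwise disjoint sets. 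The remaining task is to identify those components with the sets $\inte{I}$ for $I$ an internal or external arc of $X$.

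I would first observe that $X - (R(X) \cup E_a(X))$ is open: $E_a(X)$ is closed as the derived set of the closed set $E(X)$, and the discussion preceding \thmref{por-grafica} gives $\cl{R(X)} \subset R(X) \cup E_a(X)$. For an internal arc $[p,q]$ the set $\inte{[p,q]} = (p,q) \subset O(X)$ is a connected open subset of $X$ with topological boundary $\{p,q\} \subset R(X)$, hence is a full component. For an external arc $[p,q]$ with $p \in E(X)$ and $q \in R(X)$, I would first show $p \notin E_a(X)$: otherwise the equivalence recalled at the start of this section would force ramification points to accumulate at $p$, contradicting the existence of a fixed $q \in R(X)$ with $(p,q) \subset O(X)$ at positive distance from $p$. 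Hence $\inte{[p,q]} = [p,q) \subset O(X) \cup (E(X) - E_a(X))$ is connected and open with boundary $\{q\} \subset R(X)$, so also a full component.

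Conversely, given a component $C$ and $x \in C$, the same equivalence prevents ramification points from accumulating at $x$, so extending $x$ through ordinary points along the unique arc in $X$ in either direction terminates, after a positive distance, at points $p, q \in R(X) \cup E(X)$; the resulting closed arc $[p,q]$ satisfies $(p,q) \subset O(X)$ and $\inte{[p,q]} = C$, and since $X$ is not an arc at most one of $p,q$ lies in $E(X)$, so $[p,q]$ is internal or external. Substituting $J_{\alpha_t} = \inte{I_t}$ in \thmref{componentes} yields the statement. The step I expect to be the main obstacle is the geometric check that no external arc ends at a point of $E_a(X)$; without it, the component corresponding to such an arc would be $(p,q)$ rather than $\inte{[p,q]} = [p,q)$, and the proposed description would not match the true components of $\Lambda_n(X)$.
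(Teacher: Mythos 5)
Your proposal is correct and follows exactly the route the paper intends: the paper offers no separate proof, presenting Theorem~\ref{componente} simply as the special case $m=1$, $U_1=X$ of Theorem~\ref{componentes}, which is precisely your specialization. The additional work you supply --- identifying the components of $O(X)\cup(E(X)-E_a(X))$ with the interiors of internal and external arcs, including the key check that an external arc cannot terminate at a point of $E_a(X)$ --- is exactly the detail the paper leaves implicit, and your argument for it is sound.
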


The following result is the equivalent version of \cite[Lemma 4.3]{willy-alex}, for
elements of $\mathcal{D}.$

\begin{theorem}\label{encaje}
Let $X \in \mathcal{D}$ and $n \geq 4.$ If $A \in F_{n-1}(X),$ then
no neighborhood of $A$ in $F_n(X)$ can be embedded in $\m{R}^n.$
\end{theorem}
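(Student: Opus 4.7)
The plan is to locate, inside any open neighborhood $\mcal{U}$ of $A$ in $F_n(X)$, a closed subspace homeomorphic to a ``triple book'' $T\times I^{n-1}$ (three $n$-cells glued along a common $(n-1)$-face) and to invoke the classical fact that such a space does not embed in $\m{R}^n$.

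First, I would apply Theorem~\ref{c-de-a} with $k=n-1$ (permissible since $|A|\le n-1$) to obtain $A'\in\mcal{U}$ with $A'\subset O(X)$ and $|A'|=n-1$. Writing $A'=\{a_1,\dots,a_{n-1}\}$, since each $a_i\in O(X)$ one can choose pairwise disjoint arcs $J_1,\dots,J_{n-1}\subset X$ with $a_i\in\inte{J_i}$, small enough that $\langle J_1,\dots,J_{n-1}\rangle_n\subset\mcal{U}$. For each $j\in\{1,\dots,n-1\}$, define
$$
P_j=\bigl\{B\in\langle J_1,\dots,J_{n-1}\rangle_n : |B\cap J_i|=1\text{ for every }i\ne j\bigr\}.
$$
Imitating the product homeomorphism in the proof of Theorem~\ref{n-celda}, the map
$$
(x_1,\dots,S_j,\dots,x_{n-1})\longmapsto\{x_1,\dots,x_{j-1}\}\cup S_j\cup\{x_{j+1},\dots,x_{n-1}\}
$$
is a homeomorphism from $J_1\times\cdots\times F_2(J_j)\times\cdots\times J_{n-1}$ onto $P_j$. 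Since $F_2(J_j)$ is a $2$-cell (the symmetric square of an arc is a triangle), each $P_j$ is an $n$-cell. For distinct $j,k$ one directly verifies $P_j\cap P_k=Q$, where $Q:=J_1\times\cdots\times J_{n-1}$ is an $(n-1)$-cell, and $Q$ sits inside each $P_j$ as the $(n-1)$-face corresponding to the diagonal of $F_2(J_j)$.

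Because $n\ge 4$ forces $n-1\ge 3$, I can pick three distinct indices $j_1,j_2,j_3$ and form $W=P_{j_1}\cup P_{j_2}\cup P_{j_3}\subset\mcal{U}$. Then $W$ is three $n$-cells meeting exactly along the common $(n-1)$-face $Q$, so $W\cong T\times I^{n-1}$ with $T$ a simple triod. The classical nonembedding of such a triple book into $\m{R}^n$---provable via Jordan--Brouwer applied to the link of a spine point of $W$, which is three $(n-1)$-disks sharing a common boundary $(n-2)$-sphere and therefore cannot embed in $S^{n-1}$; this is the dendrite analogue of the graph-theoretic \cite[Lemma~4.3]{willy-alex}---then forces $W$, and hence $\mcal{U}$, not to embed in $\m{R}^n$.

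The bookkeeping (verifying the product structure of $P_j$ and that $P_j\cap P_k=Q$) is routine and mirrors Theorem~\ref{n-celda}. The substantive ingredient I expect to be the main obstacle is the nonembedding of the triple book, which is also where the hypothesis $n\ge 4$ enters: it guarantees at least three available pages.
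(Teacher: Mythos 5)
Your proposal is correct, but it takes a genuinely different route from the paper's. The paper also begins with Theorem~\ref{c-de-a}, replacing $A$ by a set $C\subset O(X)$ in the interior of the given neighborhood $\mcal{U}$; it then applies Theorem~\ref{por-grafica} to enclose $C$ in the interior of a tree $T$ with $T\cap E_a(X)=\emptyset$, notes that $\mcal{U}\cap F_n(T)$ is a neighborhood of $C$ in $F_n(T)$, and quotes the finite-graph version of the statement, \cite[Lemma 4.3]{willy-alex}, as a black box. You instead construct the obstruction explicitly: the pages $P_{j_1},P_{j_2},P_{j_3}$ are indeed $n$-cells (via $J_1\times\cdots\times F_2(J_j)\times\cdots\times J_{n-1}$ and the fact that $F_2$ of an arc is a $2$-cell with the diagonal as a boundary edge), they pairwise meet exactly in the common face $Q$, and their union is a copy of $T\times I^{n-1}$ inside $\mcal{U}$; the hypothesis $n\geq 4$ is exactly what supplies the third page. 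In effect you reprove the relevant part of \cite[Lemma 4.3]{willy-alex} from scratch, which costs more work but is self-contained, avoids the detour through trees, and makes the role of $n\geq 4$ transparent. Two small points deserve care. First, the existence of arcs $J_i$ with $a_i\in\inte{J_i}$ is where $X\in\mathcal{D}$ enters your argument: since $R(X)$ is discrete and $E(X)$ is closed, $\cl{R(X)}=R(X)\cup E_a(X)$ misses $O(X)$, so each $a_i$ has an arc neighborhood; in an arbitrary dendrite an ordinary point may be a limit of ramification points and no such arc exists. Second, justify the nonembedding of $W$ by invariance of domain rather than by a link argument, since a topological embedding need not be tame and ``the link'' of a spine point is then undefined: $P_{j_1}\cup P_{j_2}$ is itself an $n$-cell containing a suitable point $w$ of $Q$ in an open set $V\thickapprox\m{R}^n$, so under an embedding $e$ into $\m{R}^n$ the set $e(V)$ would be open, while $e\left(P_{j_3}-Q\right)$ is disjoint from $e(V)$ yet accumulates at $e(w)$, a contradiction.
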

\begin{proof}
We show first that:
\begin{enumerate}
\item[$(\ast)$] if $C \in F_{n-1}(X) - EA_n(X),$ then no neighborhood of $C$ in $F_n(X)$ can be
          embedded in $\m{R}^n.$
\end{enumerate}
To show $(\ast)$ let $C \in F_{n-1}(X) - EA_n(X)$ and assume that there is a neighborhood $\mathcal{V}$ of
$C$ in $F_n(X)$ that can be embedded in $\m{R}^n.$ By Theorem \ref{por-grafica}, there is a tree
$T$ in $X$ such that $C \subset \inte{T}$ and $T \cap E_a(X) = \emptyset.$ Then $\mathcal{V} \cap F_n(T)$
is a neighborhood of $C$ in $F_n(T)$ that can be embedded in $\m{R}^n.$ Since this contradicts
\cite[Lemma 4.3]{willy-alex}, claim $(\ast)$ holds.

To show the theorem let $A \in F_{n-1}(X).$ Assume that there is a neighborhood $\mathcal{U}$ of $A$ in
$F_n(X)$ that can be embedded in $\m{R}^n.$ By Theorem \ref{c-de-a}, there is $C \subset O(X)$ such that
$|C| = |A|$ and $C \in \inte[F_n(X)]{\mathcal{U}}.$ Since $A \in F_{n-1}(X)$ it follows that $C \in F_{n-1}(X)
- EA_n(X).$ Then, by $(\ast)$, no neighborhood of $C$ in $F_n(X)$ can be embedded in $\m{R}^n.$ However, since
$C \in \inte[F_n(X)]{\mathcal{U}},$ the set $\mathcal{U}$ is a neighborhood of $C$ in $F_n(X)$ that can be embedded in $\m{R}^n.$ This contradiction
completes the proof of the theorem.
\end{proof}

\section{The set $\mathcal{E}_n(X)$}

Given a continuum $X$ and a natural number $n,$ we consider the
following set:
$$
\mathcal{E}_n(X) = \{A \in F_n(X) \colon A \mbox{ has a neighborhood
in } F_n(X) \mbox{ which is an } n\mbox{-cell}\}.
$$
In this section we prove some properties of $\mathcal{E}_n(X).$

\begin{theorem} \label{e-0}
Let $X$ and $Y$ be continua and $n \in \m{N}.$ If $\fun{h}{F_n(X)}{F_n(Y)}$
is a homeomorphism, then $h\left(\mathcal{E}_n(X)\right) = \mathcal{E}_n(Y).$
\end{theorem}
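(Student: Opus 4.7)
The plan is to observe that the defining property of $\mathcal{E}_n(X)$, namely ``having a neighborhood that is an $n$-cell,'' is purely local and topological, so it must be preserved by any homeomorphism of the ambient spaces. Thus the proof should be essentially one line in each direction.

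Concretely, I would first show $h(\mathcal{E}_n(X)) \subseteq \mathcal{E}_n(Y)$. Take $A \in \mathcal{E}_n(X)$ and a neighborhood $\mathcal{U}$ of $A$ in $F_n(X)$ that is homeomorphic to the $n$-cell $I^n$. Since $h$ is a homeomorphism, $h(\mathcal{U})$ contains $h(A)$ in its interior (because $h$ maps $\inte[F_n(X)]{\mathcal{U}}$ homeomorphically onto an open set of $F_n(Y)$ containing $h(A)$), so $h(\mathcal{U})$ is a neighborhood of $h(A)$ in $F_n(Y)$. Restricting $h$ gives a homeomorphism between $\mathcal{U}$ and $h(\mathcal{U})$, hence $h(\mathcal{U})$ is also an $n$-cell, and therefore $h(A) \in \mathcal{E}_n(Y)$.

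For the reverse inclusion $\mathcal{E}_n(Y) \subseteq h(\mathcal{E}_n(X))$, I would simply apply the same argument to the homeomorphism $h^{-1} \colon F_n(Y) \to F_n(X)$: given $B \in \mathcal{E}_n(Y)$, the previous paragraph yields $h^{-1}(B) \in \mathcal{E}_n(X)$, i.e., $B \in h(\mathcal{E}_n(X))$.

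There is no real obstacle here; the only point that requires a second of care is checking that ``neighborhood'' in the definition of $\mathcal{E}_n$ behaves correctly under $h$, which is immediate since homeomorphisms are both open and continuous, sending open neighborhoods to open neighborhoods.
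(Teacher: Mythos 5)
Your proof is correct; the paper states this result without proof precisely because the argument is the immediate one you give — ``having a neighborhood which is an $n$-cell'' is a local topological property, preserved by $h$ and by $h^{-1}$. Your care in noting that $h$ carries $\inte[F_n(X)]{\mathcal{U}}$ to an open set containing $h(A)$, so that $h(\mathcal{U})$ is genuinely a neighborhood of $h(A)$, is exactly the one point worth checking.
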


A \emph{simple triod} is a continuum $G$ that can be written as the union of
three arcs $I_1,I_2$ and $I_3$ such that: $I_1 \cap I_2 \cap I_3 = \{p\},$ $p$
is an end point of each arc $I_i$ and $(I_i - \{p\}) \cap (I_j - \{p\}) = \emptyset,$
if $i \ne j.$ The point $p$ is called the \emph{core of $G$}.

Given a continuum $X$ let:
$$
T(X) = \{p \in X \colon p \mbox{ is the core of a simple triod
in } X\}.
$$
Let $X$ be a locally connected continuum and $A \in \mathcal{E}_n(X).$
In \cite[Lemma 3.1]{willy-alex} it is shown that $A \cap T(X) = \emptyset.$
A straightforward modification can be applied to obtain the following result.

\begin{theorem}\label{core}
Let $X$ be a locally connected continuum and $n \in \m{N}.$ If $A
\in \mathcal{E}_n(X),$ then $A \cap \cl{T(X)} = \emptyset.$
\end{theorem}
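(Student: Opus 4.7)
The plan is to leverage the already-established result \cite[Lemma 3.1]{willy-alex}, which asserts $A \cap T(X) = \emptyset$ whenever $A \in \mathcal{E}_n(X),$ and promote it from $T(X)$ to $\cl{T(X)}$ via a small perturbation argument. The key observation is that $\mathcal{E}_n(X)$ is an \emph{open} subset of $F_n(X);$ once this is in hand, a point of $A$ that only lies in the closure of $T(X)$ can be nudged slightly to produce a nearby element of $\mathcal{E}_n(X)$ that actually meets $T(X),$ contradicting the base lemma.

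First I would check that $\mathcal{E}_n(X)$ is open. If $A \in \mathcal{E}_n(X)$ has an $n$-cell neighborhood $\mathcal{N}$ in $F_n(X),$ then by the definition of ``neighborhood'' there is an open set $\mathcal{W}$ with $A \in \mathcal{W} \subseteq \mathcal{N}.$ For every $B \in \mathcal{W},$ the same $\mathcal{N}$ is an $n$-cell containing the open neighborhood $\mathcal{W}$ of $B,$ so $B \in \mathcal{E}_n(X).$ Hence $\mathcal{W} \subseteq \mathcal{E}_n(X),$ and $\mathcal{E}_n(X)$ is open in $F_n(X).$

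Next, I would argue by contradiction. Suppose there exists $A \in \mathcal{E}_n(X)$ and a point $p \in A \cap \cl{T(X)}.$ Choose a sequence $(p_k)_k$ in $T(X)$ with $p_k \to p,$ and define $A_k = (A \setminus \{p\}) \cup \{p_k\}.$ Then $|A_k| \leq |A| \leq n,$ so $A_k \in F_n(X),$ and $A_k \to A$ in the Hausdorff metric. Since $A \in \mathcal{E}_n(X)$ and $\mathcal{E}_n(X)$ is open, we get $A_k \in \mathcal{E}_n(X)$ for all sufficiently large $k.$ But $p_k \in A_k \cap T(X),$ so $A_k \cap T(X) \neq \emptyset,$ which contradicts \cite[Lemma 3.1]{willy-alex}. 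This contradiction forces $A \cap \cl{T(X)} = \emptyset.$

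The only step requiring any genuine attention is the openness of $\mathcal{E}_n(X);$ choosing the perturbing sequence, verifying Hausdorff convergence, and invoking the base lemma are all routine, which is exactly why the authors describe the required modification as \emph{straightforward}.
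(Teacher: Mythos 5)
Your proof is correct, but it reaches the conclusion by a different route than the one the paper indicates. The paper offers no written proof: it asserts that ``a straightforward modification'' of the proof of \cite[Lemma 3.1]{willy-alex} yields the statement, i.e.\ it asks the reader to reopen the internal argument of that lemma (the triod-based obstruction at a point of $T(X)$) and check that it survives passage to limit points of $T(X).$ You instead treat \cite[Lemma 3.1]{willy-alex} as a black box and upgrade its conclusion from $T(X)$ to $\cl{T(X)}$ using only two general facts: that $\mathcal{E}_n(X)$ is open in $F_n(X)$ (which your argument establishes correctly -- any $n$-cell neighborhood of $A$ contains an open set each of whose elements it also neighbors), and that replacing $p \in A$ by nearby points $p_k \in T(X)$ produces sets $A_k \in F_n(X)$ with $H(A_k,A) \le d(p_k,p) \to 0.$ For large $k$ openness forces $A_k \in \mathcal{E}_n(X)$ while $p_k \in A_k \cap T(X),$ contradicting the base lemma; the degenerate case $p \in T(X)$ is already excluded by the lemma itself. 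What your approach buys is independence from the details of the cited proof and a genuinely self-contained one-paragraph derivation; what the paper's suggested route buys is nothing extra here, beyond perhaps a marginally more direct geometric picture. Your version is the cleaner one and is a complete, valid proof of the theorem as stated.
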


\begin{theorem} \label{e-3}
Let $X \in \mathcal{D}$ and $n \in \m{N}.$ Then $\Lambda_n(X) -
F_{n-1}(X) \subset \mathcal{E}_n(X).$
\end{theorem}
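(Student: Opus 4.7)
The plan is to produce, for $A = \{a_1,\ldots,a_n\} \in \Lambda_n(X) - F_{n-1}(X)$, pairwise disjoint arcs $J_1,\ldots,J_n$ in $X$ with $a_i \in \inte{J_i}$ for each $i$. Once these are in hand, Theorem~\ref{n-celda} supplies the $n$-cell $\langle J_1,\ldots,J_n\rangle_n$, and this set is a neighborhood of $A$ in $F_n(X)$ because the basic open set $\langle \inte{J_1},\ldots,\inte{J_n}\rangle_n$ is contained in it and contains $A$. Unpacking the hypothesis: $A \in \Lambda_n(X)$ means $A \cap (R(X) \cup E_a(X)) = \emptyset$, so each $a_i$ lies in $O(X) \cup (E(X) - E_a(X))$, and $|A| = n$ because $A \notin F_{n-1}(X)$.

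The construction of the $J_i$ splits into two cases. If $a_i \in O(X)$, then using that $E(X)$ is closed (since $X \in \mathcal{D}$) together with the Section~3 observation that any limit of a sequence of distinct ramification points lies in $E_a(X)$, one sees that $a_i$ is a limit of neither ramification points nor endpoints. Hence $a_i$ has a connected open neighborhood $V_i$ contained in $O(X)$, and using local arc structure at an ordinary point one extracts a closed arc $J_i \subset \cl{V_i}$ with $a_i \in \inte{J_i}$. If instead $a_i \in E(X) - E_a(X)$, the same Section~3 discussion implies $a_i$ is not a limit of ramification points; combined with $\ord_{a_i}X = 1$, this produces arbitrarily small neighborhoods of $a_i$ of the form $[a_i,q_i]$ with $(a_i,q_i) \subset O(X)$. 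Setting $J_i = [a_i,q_i]$, since $a_i$ is an endpoint of $X$ every sufficiently small open neighborhood of $a_i$ in $X$ sits inside $J_i$, so again $a_i \in \inte{J_i}$. Finally, because the points $a_i$ are distinct, a further shrinking makes the arcs pairwise disjoint without destroying the property $a_i \in \inte{J_i}$.

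The main obstacle is the ordinary-point case: guaranteeing that $a_i$ really has an \emph{arc} neighborhood with $a_i$ in the $X$-interior, rather than merely a complicated open neighborhood in which ramification points might accumulate arbitrarily close to $a_i$. The assumption $X \in \mathcal{D}$ is precisely what rules this pathology out, via the dichotomy recorded at the beginning of Section~3 between limits of ramification points and limits of endpoints. Once the arcs $J_1,\ldots,J_n$ have been secured, the conclusion is immediate: Theorem~\ref{n-celda} turns $\langle J_1,\ldots,J_n\rangle_n$ into an $n$-cell, the inclusion $\langle \inte{J_1},\ldots,\inte{J_n}\rangle_n \subset \langle J_1,\ldots,J_n\rangle_n$ together with $A \in \langle \inte{J_1},\ldots,\inte{J_n}\rangle_n$ shows that this $n$-cell is a neighborhood of $A$ in $F_n(X)$, and therefore $A \in \mathcal{E}_n(X)$.
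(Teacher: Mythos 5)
Your proposal is correct and takes essentially the same route as the paper: both reduce the statement to producing $n$ pairwise disjoint arcs $J_1,\ldots,J_n$ with $a_i \in \inte{J_i}$ and then invoke Theorem~\ref{n-celda} to see that $\langle J_1,\ldots,J_n\rangle_n$ is an $n$-cell which is a neighborhood of $A$. The only difference is that you spell out the construction of the arcs (the ordinary-point case and the isolated-end-point case, using the Section~3 dichotomy and the closedness of $E(X)$), whereas the paper simply asserts their existence; your added detail is sound.
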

\begin{proof}
Take $A \in \Lambda_n(X) - F_{n-1}(X).$ Then $|A| = n,$ so we can write
$$
A = \{x_1,x_2,\ldots,x_n\}.
$$
Since $A \in \Lambda_n(X),$ we have
$A \subset O(X) \cup (E(X) - E_a(X)).$ Then there exist $n$
pairwise disjoint arcs $J_1,J_2,\ldots,J_n$ in $X$ such that
$x_i \in \inte{J_i},$ for each $i \in \{1,2,\ldots,n\},$ and
$$
J_1 \cup J_2 \cup \cdots \cup J_n \subset O(X) \cup (E(X) - E_a(X)).
$$
Note that $\langle J_1, J_2, \ldots,J_n\rangle_n$ is a neighborhood of
$A$ in $F_n(X)$ which is an $n$-cell, by Theorem \ref{n-celda}. Then
$A \in \mathcal{E}_n(X).$
\end{proof}

\begin{theorem} \label{en-denso}
Let $X \in \mathcal{D}$ and $n \in \m{N}.$ Then $\mathcal{E}_n(X)$
is dense in $F_n(X).$
\end{theorem}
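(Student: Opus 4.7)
My plan is to prove density by showing that every basic open neighborhood in $F_n(X)$ meets $\Lambda_n(X) - F_{n-1}(X)$, which by Theorem \ref{e-3} is contained in $\mathcal{E}_n(X)$. The two ingredients I will combine are Theorem \ref{c-de-a} (which lets us enlarge the cardinality of an approximating set while keeping it inside the neighborhood and inside $O(X)$) and the structural observation that ordinary points of a dendrite in $\mathcal{D}$ are automatically disjoint from both $R(X)$ and $E_a(X)$.

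Concretely, I would fix $A \in F_n(X)$ and an arbitrary neighborhood $\mathcal{U}$ of $A$ in $F_n(X)$. Applying Theorem \ref{c-de-a} with $k = n$, I obtain a set $C \subset O(X)$ with $|C| = n$ and $C \in \mathcal{U}$. The next step is the verification that $C$ lies in $\Lambda_n(X) - F_{n-1}(X)$. Since $O(X)$ consists of points of order $2$ in $X$, we have $C \cap R(X) = \emptyset$ (points of $R(X)$ have order at least $3$) and $C \cap E_a(X) = \emptyset$ (since $E_a(X) \subset E(X)$, whose points have order $1$). Therefore $C \notin R_n(X) \cup EA_n(X)$, so $C \in \Lambda_n(X)$. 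The condition $|C| = n$ forces $C \notin F_{n-1}(X)$.

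Putting these together, $C \in \Lambda_n(X) - F_{n-1}(X)$, and Theorem \ref{e-3} then yields $C \in \mathcal{E}_n(X)$. Since every neighborhood of every $A \in F_n(X)$ contains such a $C$, $\mathcal{E}_n(X)$ is dense in $F_n(X)$.

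Honestly there is no serious obstacle here: once one recognizes that Theorem \ref{c-de-a} is exactly engineered to produce full-cardinality approximations inside $O(X)$, and that $\Lambda_n(X) - F_{n-1}(X)$ is precisely the locus of $n$-point subsets of $O(X) \cup (E(X) - E_a(X))$, the density statement reduces to a one-line application of Theorem \ref{e-3}. The only point that requires any care at all is to record why $O(X)$ is disjoint from $R(X) \cup E_a(X)$, which is immediate from the definitions in terms of Menger--Urysohn order.
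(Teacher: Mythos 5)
Your proposal is correct and is essentially identical to the paper's own proof: both apply Theorem \ref{c-de-a} with $k=n$ to produce an $n$-point subset of $O(X)$ inside the given open set, observe that it lies in $\Lambda_n(X) - F_{n-1}(X)$, and conclude via Theorem \ref{e-3}. The only difference is that you spell out explicitly why $O(X)$ is disjoint from $R(X) \cup E_a(X)$, which the paper leaves implicit.
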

\begin{proof}
Let $\mathcal{U}$ be a nonempty open subset of $F_n(X).$ By Theorem \ref{c-de-a}
there is $D \subset O(X)$ such that $|D| = n$ and $D \in \mathcal{U}.$ Note that
$D \in \Lambda_n(X) - F_{n-1}(X)$ so, by Theorem \ref{e-3}, $D \in \mathcal{E}_n(X).$ This
shows that $\mathcal{E}_n(X)$ is dense in $F_n(X).$
\end{proof}

\begin{theorem}\label{e-4}
Let $X \in \mathcal{D}$ and $n \in \m{N}.$ Then
\begin{enumerate}
\item[a)] $\mathcal{E}_n(X) \subset \Lambda_n(X);$
\item[b)] if $n \in \{2,3\},$ then $\mathcal{E}_n(X) = \Lambda_n(X);$
\item[c)] if $n \geq 4,$ then $\mathcal{E}_n(X) = \Lambda_n(X) - F_{n -1}(X).$
\end{enumerate}
\end{theorem}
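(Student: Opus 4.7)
The plan is to address the three parts in the order (a), (c), (b), since (a) and (c) each follow in a few lines from results already available, while (b) requires one extra construction.

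For (a), I would show $R(X)\cup E_a(X)\subseteq\cl{T(X)}$: every ramification point of $X$ is the core of a simple triod, so $R(X)\subseteq T(X)$, and by the observation opening Section~3 every point of $E_a(X)$ is a limit of ramification points, so $E_a(X)\subseteq\cl{R(X)}\subseteq\cl{T(X)}$. If $A\in\mathcal{E}_n(X)$, Theorem~\ref{core} then yields $A\cap\cl{T(X)}=\emptyset$, forcing $A\subseteq O(X)\cup(E(X)-E_a(X))$, i.e.\ $A\in\Lambda_n(X)$.

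For (c), the inclusion $\Lambda_n(X)-F_{n-1}(X)\subseteq\mathcal{E}_n(X)$ is Theorem~\ref{e-3}. Conversely, any $A\in\mathcal{E}_n(X)$ lies in $\Lambda_n(X)$ by (a); and if in addition $A\in F_{n-1}(X)$, its $n$-cell neighborhood in $F_n(X)$ would embed in $\m{R}^n$, contradicting Theorem~\ref{encaje}, which applies precisely because $n\geq 4$.

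For (b), part (a) again gives $\mathcal{E}_n(X)\subseteq\Lambda_n(X)$ and Theorem~\ref{e-3} gives $\Lambda_n(X)-F_{n-1}(X)\subseteq\mathcal{E}_n(X)$, so the new content is to show, for $n\in\{2,3\}$, that every $A\in\Lambda_n(X)\cap F_{n-1}(X)$ has an $n$-cell neighborhood in $F_n(X)$. Since the points of $A$ all lie in $O(X)\cup(E(X)-E_a(X))$, I can choose pairwise disjoint arcs $J_1,\ldots,J_{|A|}$ with $x_i\in\inte{J_i}$ and $J_i-\{x_i\}\subseteq O(X)$. If $|A|=1$, then $\langle J_1\rangle_n=F_n(J_1)$ is an $n$-cell (the $n$-th symmetric product of an arc is homeomorphic to the $n$-simplex $\{0\leq t_1\leq\cdots\leq t_n\leq 1\}$), and $\langle\inte{J_1}\rangle_n$ is an open neighborhood of $A$ inside it. The only remaining case is $n=3$ and $|A|=2$, where I would decompose
\[
\langle J_1,J_2\rangle_3=\bigl(F_1(J_1)\times F_2(J_2)\bigr)\cup\bigl(F_2(J_1)\times F_1(J_2)\bigr),
\]
with the two pieces meeting in the square $F_1(J_1)\times F_1(J_2)$. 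Each piece is a triangular prism, hence a $3$-cell, and the overlap sits as a rectangular face of each; since gluing two $3$-cells along a $2$-disk in their boundaries produces a $3$-cell, $\langle J_1,J_2\rangle_3$ is a $3$-cell neighborhood of $A$ (its $F_3(X)$-interior contains the open set $\langle\inte{J_1},\inte{J_2}\rangle_3\ni A$). The main obstacle is this last gluing step: one must correctly identify $F_1(J_i)$ with the diagonal edge of the triangle $F_2(J_i)$ and verify that the overlap actually lies on the boundary of each prism; once this is done the rest is routine.
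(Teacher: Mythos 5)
Your parts a) and c) coincide with the paper's argument: a) is Theorem~\ref{core} plus the observation (made explicit by you, implicit in the paper's ``Since $X\in\mathcal{D}$\dots'') that $R(X)\cup E_a(X)\subset\cl{T(X)}$ because ramification points are triod cores and points of $E_a(X)$ are limits of ramification points; c) is exactly the paper's combination of a), Theorem~\ref{encaje} and Theorem~\ref{e-3}. The genuine divergence is in b): the paper disposes of it in one line by citing the proof of Lemma~5.1 of Casta\~neda--Illanes, whereas you supply a self-contained construction of the missing $n$-cell neighborhoods for $A\in\Lambda_n(X)\cap F_{n-1}(X)$, $n\in\{2,3\}$. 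Your case analysis is complete ($|A|=1$ for $n=2$; $|A|\in\{1,2\}$ for $n=3$), the identification of $F_n(\mathrm{arc})$ with the ordered $n$-simplex handles $|A|=1$, and the decomposition of $\langle J_1,J_2\rangle_3$ via $A\mapsto(A\cap J_1,A\cap J_2)$ into two triangular prisms meeting in $F_1(J_1)\times F_1(J_2)$ is correct: $F_1(J_i)$ is the diagonal edge of the triangle $F_2(J_i)$, so the square overlap does lie in the manifold boundary of each prism, and the standard fact that two $n$-cells meeting in a common $(n-1)$-cell of their boundaries union to an $n$-cell (elementary in this polyhedral situation) finishes the job. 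The trade-off is clear: the paper's route is shorter but outsources the only nontrivial content of b) to an external reference proved for finite graphs; yours is longer but keeps the argument internal and makes visible exactly why the claim fails for $n\geq 4$ (for $|A|<n$ the analogous decomposition has too many pieces of the wrong dimensions to assemble into an $n$-cell, which is what Theorem~\ref{encaje} detects). One small point worth stating explicitly in a final write-up: that $\langle J_1,J_2\rangle_3$ is a \emph{neighborhood} of $A$ requires $\inte{J_i}$ (interior in $X$) to contain $x_i$, which is where the hypothesis $A\subset O(X)\cup(E(X)-E_a(X))$ and the choice $J_i-\{x_i\}\subset O(X)$ are used, exactly as in the paper's proof of Theorem~\ref{e-3}.
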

\begin{proof}
To show a) let $A \in \mathcal{E}_n(X).$ By Theorem \ref{core},
$A \cap \cl{T(X)} = \emptyset.$ Since $X \in \mathcal{D},$ this
implies that $A \cap (R(X) \cup E_a(X)) = \emptyset.$ Thus $A \in
\Lambda_n(X),$ so a) holds. Assertion b) follows from a) and
the proof of \cite[Lemma 5.1]{willy-alex}. To show c) assume that $n \geq 4.$
Take $A \in \mathcal{E}_n(X).$ By a),
$A \in \Lambda_n(X).$ Let $\mathcal{U}$ be a neighborhood of $A$ in $F_n(X)$ which
is an $n$-cell. Then $\mathcal{U}$ can be embedded in $\m{R}^n$ so, by
Theorem \ref{encaje}, $A \notin F_{n-1}(X).$ This shows that $\mathcal{E}_n(X) \subset
\Lambda_n(X) - F_{n -1}(X).$ The other inclusion holds by Theorem \ref{e-3}.
\end{proof}

\begin{theorem} \label{importante1}
Let $X \in\mathcal{D}$ and $A \in F_n(X).$ If $A \cap E_a(X) = \emptyset,$
then there exists a basis $\mathfrak{B}$ of open neighborhoods of $A$ in $F_n(X)$ such
that for each $\mathcal{V} \in \mathfrak{B}$, the set $\mathcal{V} \cap \mathcal{E}_n(X)$
is nonempty and has a finite number of components.
\end{theorem}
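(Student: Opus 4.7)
The plan is to use Theorem \ref{por-grafica} to localize the analysis to a small tree around $A$, and then extract the component structure of basic neighborhoods via Theorem \ref{componentes}. Since $A \cap E_a(X) = \emptyset$, Theorem \ref{por-grafica} provides a tree $T \subset X$ with $A \subset \inte{T}$ and $T \cap E_a(X) = \emptyset$. I would first note that $T \cap R(X)$ is finite: otherwise compactness of $T$ would produce a limit point $s \in T$ of a sequence of pairwise distinct ramification points, and by the observations at the start of \S 3, such an $s$ must lie in $E_a(X) \cap T = \emptyset$, a contradiction.

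Write $A = \{a_1,\dots,a_m\}$ with $m \le n$. Using local connectedness of $X$ together with the finiteness of $T \cap R(X)$, I would choose, for each $a_i$, arbitrarily small open connected neighborhoods $U_i$ that are pairwise disjoint, contained in $\inte{T}$, and satisfying $U_i \cap R(X) \subset \{a_i\}$; when $a_i \in R(X)$ I would additionally arrange $U_i$ to be star-shaped about $a_i$ so that $U_i - \{a_i\}$ consists of exactly $\ord_{a_i}X$ open branches. Let $\mathfrak{B}$ consist of the resulting sets $\mathcal{V} = \langle U_1,\dots,U_m\rangle_n$; this forms a basis of open neighborhoods of $A$ in $F_n(X)$, and nonemptiness of $\mathcal{V} \cap \mathcal{E}_n(X)$ follows at once from Theorem \ref{en-denso}.

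To count components, fix $\mathcal{V} \in \mathfrak{B}$ and apply Theorem \ref{componentes}. Since $U_i \cap E_a(X) = \emptyset$, one has $U_i \cap [O(X) \cup (E(X) - E_a(X))] = U_i - R(X)$, which equals the connected set $U_i$ when $a_i \notin R(X)$, and splits into $\ord_{a_i}X$ open branches when $a_i \in R(X)$. In either case this has finitely many components, so Theorem \ref{componentes} yields only finitely many components of $\mathcal{V} \cap \Lambda_n(X)$, each of the form $\langle J_1,\dots,J_l\rangle_n$ with $l \le n$ and each $J_j$ open and connected. For $n \in \{2,3\}$, Theorem \ref{e-4}(b) identifies $\mathcal{V} \cap \mathcal{E}_n(X)$ with $\mathcal{V} \cap \Lambda_n(X)$ and we are done; the case $n=1$ is a simpler direct variant.

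The hard part will be $n \ge 4$, where Theorem \ref{e-4}(c) gives $\mathcal{V} \cap \mathcal{E}_n(X) = \mathcal{V} \cap \Lambda_n(X) - F_{n-1}(X)$, and one must check that removing $F_{n-1}(X)$ does not produce infinitely many components. I would stratify each piece $\langle J_1,\dots,J_l\rangle_n$ of $\mathcal{V} \cap \Lambda_n(X)$ by the compositions $(n_1,\dots,n_l)$ with $n_j \ge 1$ and $\sum n_j = n$, of which there are only finitely many. Because the $J_j$ are pairwise disjoint open sets, configurations belonging to distinct compositions sit in distinct components of $\mathcal{V} \cap \mathcal{E}_n(X)$ (shifting a point between $J_i$ and $J_j$ would force the configuration temporarily outside $\bigcup_j J_j$), and each individual stratum is homeomorphic to $\prod_j \bigl(F_{n_j}(J_j) - F_{n_j-1}(J_j)\bigr)$, which is connected because each factor is the space of $n_j$-element subsets of the path-connected open set $J_j$. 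Hence $\mathcal{V} \cap \mathcal{E}_n(X)$ has only finitely many components, completing the proof.
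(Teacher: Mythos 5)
Your proposal is correct and follows essentially the same route as the paper's proof: localize to a tree via Theorem~\ref{por-grafica}, take basic neighborhoods $\langle U_1,\dots,U_m\rangle_n$ with each $U_i$ meeting $R(X)$ in at most the point $a_i$, apply Theorem~\ref{componentes} to get finitely many components of $\mathcal{V}\cap\Lambda_n(X)$, and for $n\ge 4$ stratify each component by the compositions $(q_1,\dots,q_k)$ of $n$, each stratum being a connected product of sets $F_{q_t}(J)-F_{q_t-1}(J)$. The only differences are cosmetic (the paper imposes the slightly stronger condition $U_i-\{x_i\}\subset O(X)$, and it does not need your observation that distinct compositions lie in distinct components, since finiteness already follows from writing the set as a finite union of connected pieces).
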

\begin{proof}
Since $A \cap E_a(X) = \emptyset,$ we have $A \in F_n(X) - EA_n(X).$ Thus,
by Theorem \ref{por-grafica}, there is a tree $T$ in $X$ such that $A \subset \inte{T}$ and
$T \cap E_a(X) = \emptyset.$ Let $A = \{x_1,x_2,\ldots,x_m\}$ and consider that $A$ has
exactly $m$ points. Let $\ve > 0.$ Choose a finite collection $U_1,U_2,\ldots,U_m$ of pairwise
disjoint open connected subsets of $X$ with the following properties:

\begin{enumerate}
\item[1)] $x_i \in U_i \subset \inte{T} \cap B_X(x_i,\ve),$ for each $i \in \{1,2,\ldots,m\};$
\item[2)] $U_i - \{x_i\} \subset O(X),$ for each $i \in \{1,2,\ldots,m\}.$
\end{enumerate}

Let $\mathcal{V}_\ve = \langle U_1,U_2,\ldots,U_m\rangle_n.$ By 1) we have
$\mathcal{V}_\ve \subset B_{F_n(X)}(A,\ve)$ and, by Theorem \ref{en-denso},
$\mathcal{V}_\ve \cap \mathcal{E}_n(X) \ne \emptyset.$ Given $i \in \{1,2,\ldots,m\},$
since $X \in\mathcal{D},$ the order of $x_i$ in $X$ is finite. From this and 2),
the set
$$
U_i \cap [O(X) \cup (E(X) - E_a(X))]
$$
\noindent has a finite number of components. Let
$\left\{J^i_1,J^i_2,\ldots,J^i_{l_i}\right\}$ be the set of components of $U_i \cap [O(X) \cup (E(X) - E_a(X))].$
By Theorem \ref{componentes} the components of $\mathcal{V}_\ve \cap \Lambda_n(X)$ are the nonempty
sets of the form:
\begin{equation}\label{forma}
\left\langle J^{r_1}_{s_1}, J^{r_2}_{s_2}, \ldots,J^{r_k}_{s_k}\right\rangle_n
\end{equation}
\noindent where $\{r_1,r_2,\ldots,r_k\} = \{1,2,\ldots,m\},$ the sets
$J^{r_1}_{s_1}, J^{r_2}_{s_2}, \ldots,J^{r_k}_{s_k}$ are pairwise different and
$s_t \in \left\{1,2,\ldots,l_{r_t}\right\},$ for every $t \in \{1,2,\ldots,k\}.$ Since we have a finite
number of elements of the form $J^{r_i}_{s_t},$ the number of nonempty sets of the form (\ref{forma}) is finite.

If $n \in \{2,3\}$ then, by part b) of Theorem \ref{e-4}, the nonempty sets of the form (\ref{forma}) are the
components of $\mathcal{V}_\ve \cap \mathcal{E}_n(X).$ Assume then that $n \geq 4.$ Then, by part c) of Theorem \ref{e-4}, $\mathcal{E}_n(X) = \Lambda_n(X) - F_{n -1}(X).$ Given a component $\mathcal{C} = \left\langle J^{r_1}_{s_1}, J^{r_2}_{s_2}, \ldots,J^{r_k}_{s_k}\right\rangle_n$
of $\mathcal{V}_\ve \cap \Lambda_n(X)$ and $(q_1,q_2,\ldots,q_k) \in \m{N}^k$ such that $q_1 + q_2 + \cdots + q_k =
n$ let:
$$
\mathcal{C}(q_1,q_2,\ldots,q_k) =
\left\{C \in \mathcal{C} \colon \left|C \cap J^{r_t}_{s_t}\right| = q_t \mbox{ for each } t \in \{1,2,\ldots,k\}\right\}.
$$
Note that $\mathcal{C}(q_1,q_2,\ldots,q_k) \subset \mathcal{V}_\ve \cap \mathcal{E}_n(X).$
It is not difficult to see that $\mathcal{C}(q_1,q_2,\ldots,q_k)$ is homeomorphic to
$$
\left(F_{q_1}(J^{r_1}_{s_1}) - F_{q_1 -1}(J^{r_1}_{s_1})\right) \times
\cdots \times
\left(F_{q_k}\left(J^{r_k}_{s_k}\right) - F_{q_k -1}\left(J^{r_k}_{s_k}\right)\right),
$$
\noindent where we agree that $F_0(R) = \emptyset$ for each continuum $R.$ Since the sets
$$
F_{q_1}\left(J^{r_1}_{s_1}\right) - F_{q_1 -1}\left(J^{r_1}_{s_1}\right), \ldots,
F_{q_k}\left(J^{r_k}_{s_k}\right) - F_{q_k -1}\left(J^{r_k}_{s_k}\right)
$$
\noindent are connected, $\mathcal{C}(q_1,q_2,\ldots,q_k)$ is a connected subset of
$\mathcal{V}_\ve \cap \mathcal{E}_n(X).$
Moreover
$$
\mathcal{C} \cap \mathcal{E}_n(X) = \bigcup \left\{\mathcal{C}(q_1,\ldots,q_k) \colon
(q_1,\ldots,q_k) \in \m{N}^k \mbox{ and } q_1 + \cdots + q_k = n \right\}.
$$
This implies that $\mathcal{C} \cap \mathcal{E}_n(X)$ has a finite number of components.
Since each component of $\mathcal{C} \cap \mathcal{E}_n(X)$ is a component of
$\mathcal{V}_\ve \cap \mathcal{E}_n(X)$ and $\mathcal{V}_\ve \cap \Lambda_n(X)$ has
a finite number of components, the set $\mathcal{V}_\ve \cap \mathcal{E}_n(X)$ has
a finite number of components as well.

To finish the proof note that
$\mathfrak{B} = \{\mathcal{V}_\ve \colon \ve > 0\}$ is a basis of open neighborhoods
of $A$ in $F_n(X).$
\end{proof}

In \cite{Bing} and \cite{Moise} it is proved that locally connected continua admit a convex metric $d.$
This means that every two points $x,y \in X$ can be joined by an arc $J$ in $X,$ in such a way that
$J$ is isometric to the closed interval $[0,d(x,y)].$

\begin{theorem} \label{vecindades contraibles}
Let $X \in \mathcal{D}$ and $A \in F_n(X).$ Assume that $A \cap E(X) \ne \emptyset.$ Then
there exists a basis $\mathfrak{B}$ of open neighborhoods of $A$ in $F_n(X)$ such that, for each
$\mathcal{V} \in \mathfrak{B},$ the set $\mathcal{V} - \{A\}$ is contractible. Moreover if
$A \cap E_a(X) \ne \emptyset$ we can choose $\mathfrak{B}$ with the additional property that,
for each $\mathcal{V} \in \mathfrak{B},$ the set $\mathcal{V} \cap \mathcal{E}_n(X)$ has infinitely many
components.
\end{theorem}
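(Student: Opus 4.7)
The plan is to construct arc-convex ball neighborhoods of the points of $A$ using a Bing-Moise convex metric, and to exploit an end point in $A$ to define an explicit contraction of $\mathcal{V}-\{A\}$ onto a nearby auxiliary element $B_0$. Write $A=\{x_1,\dots,x_m\}$ with, after relabeling, $x_1\in A\cap E(X)$, fix a convex metric $d$ on $X$, and for each small $\ve>0$ choose pairwise disjoint connected open balls $U_i=B_X(x_i,\delta_i)$ with $\delta_i<\ve$ and pairwise disjoint closures. Set $\mathcal{V}_\ve=\langle U_1,\dots,U_m\rangle_n$; the family $\{\mathcal{V}_\ve:\ve>0\}$ will be the basis $\mathfrak{B}$. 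A quick median argument shows each $U_i$ is arc-convex, i.e.\ $y,z\in U_i$ implies $[y,z]\subset U_i$: letting $w$ be the median of $\{x_i,y,z\}$, one has $d(x_i,w)\leq d(x_i,y)$, and every point of $[y,w]\cup[w,z]$ lies within $\max\{d(x_i,y),d(x_i,z)\}<\delta_i$ of $x_i$.

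Choose $p_1\in U_1\setminus\{x_1\}$ and set $B_0=\{p_1,x_2,\dots,x_m\}\in\mathcal{V}_\ve-\{A\}$. Define a continuous $\rho\colon\bigcup_iU_i\times I\to\bigcup_iU_i$ by: for $y\in U_i$ with $i\geq 2$, $\rho(y,t)$ is the point of $[x_i,y]$ at distance $(1-t)d(x_i,y)$ from $x_i$; for $y\in U_1$, $\rho(y,t)$ is the point of $[y,p_1]$ at distance $t\,d(y,p_1)$ from $y$. Arc-convexity keeps each image inside its $U_i$, and continuity of the arc function on a dendrite, together with disjointness of the $U_i$, makes $\rho$ jointly continuous. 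Then $H(B,t)=\{\rho(y,t):y\in B\}$ is continuous on $\mathcal{V}_\ve\times I$, with $H(B,0)=B$ and $H(B,1)=B_0$. The heart of the argument is to check $H(B,t)\ne A$ whenever $B\ne A$ and $t\in I$: since $\ord_{x_1}X=1$, the end point $x_1$ cannot be interior to any arc, so $x_1\notin[y,p_1]$ whenever $y\ne x_1$, while $\rho(x_1,t)$ lies strictly between $x_1$ and $p_1$ for $t>0$. Hence $x_1\notin H(B,t)$ for $t>0$, which forbids $H(B,t)=A$, and $H(B,0)=B\ne A$ by hypothesis. This exhibits $\mathcal{V}_\ve-\{A\}$ as contractible onto $B_0$.

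For the moreover clause, suppose additionally $x_1\in E_a(X)$. By the remark preceding Theorem~\ref{por-grafica}, $x_1$ is a limit of ramification points, so $U_1$ contains infinitely many of them. These subdivide $U_1\cap[O(X)\cup(E(X)-E_a(X))]$ into infinitely many components $\{J^1_\alpha:\alpha\in\mathcal{A}_1\}$, and, shrinking each $U_i$ for $i\geq 2$ if necessary so that $U_i\cap[O(X)\cup(E(X)-E_a(X))]$ is nonempty (possible since $O(X)$ is dense), Theorem~\ref{componentes} produces infinitely many components of $\mathcal{V}_\ve\cap\Lambda_n(X)$. For $n\in\{2,3\}$ this set equals $\mathcal{V}_\ve\cap\mathcal{E}_n(X)$ by Theorem~\ref{e-4}(b); for $n\geq 4$, the decomposition used in the proof of Theorem~\ref{importante1} splits each such component into finitely many but at least one components of $\mathcal{V}_\ve\cap\mathcal{E}_n(X)$, so the total count remains infinite.

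The main obstacle is arranging the homotopy so that it never lands on $A$: the naive deformation retracting $\mathcal{V}_\ve$ onto $\{A\}$ hits the forbidden value at $t=1$, so one must aim at an off-diagonal target $B_0$, and the only way to guarantee that none of the trajectories $t\mapsto\rho(y,t)$ accidentally meets the coordinate $x_1$ is to use the end-point condition $\ord_{x_1}X=1$, which prevents any arc from crossing $x_1$ in its interior. The auxiliary arc-convexity of the balls $U_i$, used to keep $\rho$ inside the neighborhood, is what makes the Bing-Moise convex metric essential here.
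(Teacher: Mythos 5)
Your proof is correct and follows essentially the same route as the paper's: the same basic neighborhoods $\langle U_1,\dots,U_m\rangle_n$ built from a convex metric, a contraction of $\mathcal{V}_\ve-\{A\}$ onto an off-diagonal target that avoids $A$ precisely because the end point $x_1$ cannot lie in the interior of any arc, and, for the ``moreover'' clause, infinitely many components of $\mathcal{V}_\ve\cap\mathcal{E}_n(X)$ produced by the ramification points accumulating at $x_1$, via Theorems \ref{componentes} and \ref{e-4}. The only differences (proportional rather than unit-speed parametrization of the homotopy, and contracting the coordinates in $U_i$ for $i\ge 2$ to $x_i$ instead of to auxiliary points $b_i$) are cosmetic.
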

\begin{proof}
Let $d$ be a convex metric on $X.$ Assume that $|A| = m.$ Let $A = \{x_1,x_2,\ldots,x_m\}$ and assume that
$x_1 \in E(X).$ Let $\ve > 0.$ Choose a finite collection $U_1,U_2,\ldots,U_m$ of pairwise disjoint
open connected subsets of $X$ such that $x_i \in U_i \subset B_X(x_i,\ve),$ for each
$i \in \{1,2,\ldots,m\}.$ Let $\mathcal{V}_\ve = \langle U_1,U_2,\ldots,U_m\rangle_n.$ Clearly
$A \in \mathcal{V}_\ve \subset B_{F_n(X)}(A,\ve).$ Assume that $\diam(U_i) < 1,$ for each
$i \in \{1,2,\ldots,m\}.$ Fix $B = \{b_1,b_2,\ldots,b_m\}$
so that $b_1 \in U_1 - \{x_1\}$ and $b_i \in U_i$ for each $i \in \{2,3,\ldots,m\}.$ Note
that $B \in \mathcal{V}_\ve - \{A\}.$ Given $i \in \{1,2,\ldots,m\}$ and $(x,t) \in
U_i \times I,$ by \cite[Theorem 8.26]{Nadler}, $[x,b_i] \subset U_i.$ We also have that
$[x,b_i]$ is isometric to the closed interval $\left[0,d(x,b_i)\right].$ Hence if
$d(x,b_i) \geq t$ there is a unique point $y_x \in [x,b_i]$ such that $d(x,y_x) = t.$
We can then define a function $\fun{g_i}{U_i \times I}{U_i}$ by:
$$
g_i(x,t) =
\left\{
  \begin{array}{ll}
    b_i, & \hbox{if } d(x,b_i) \leq t;\\
    y_x, & \hbox{if } d(x,b_i) \geq t.
  \end{array}
\right.
$$
It is not difficult to prove that $g_i$ is a continuous function. Note that
$g_i(x,0) = x$ and $g_i(x,1) = b_i,$ for all $x \in U_i.$ If $x \in U_1 - \{x_1\}$
then $[x,b_1] \subset U_1 - \{x_1\}$ so, by the definition of $g_1,$ we have
$g_1(x,t) \in U_1 - \{x_1\}$ for every $t \in I.$

Define $\fun{G}{\left(\mathcal{V}_\ve - \{A\}\right) \times I}{\mathcal{V}_\ve - \{A\}}$
so that if $(D,t) \in \left(\mathcal{V}_\ve - \{A\}\right) \times I,$ then:
$$
G(D,t) = \bigcup_{i=1}^m g_i\left((D \cap U_i) \times \{t\}\right).
$$
It is not difficult to see that $G$ is well defined and continuous. Since $G(D,0) =
\bigcup_{i=1}^m (D \cap U_i) = D$ and $G(D,1) = B,$ for each $D \in \mathcal{V}_\ve - \{A\},$
the set $\mathcal{V}_\ve - \{A\}$ is contractible.

Let us assume now that $x_1 \in E_a(X).$ Since $X \in \mathcal{D},$ each element of $E_a(X)$
is the limit of a sequence of distinct ramification points of $X,$ all in the same arc. We also
have, since $X \in \mathcal{D},$ that $R(X)$ is discrete (\cite[Corollary 3.6]{Paty}). Then we
can find a sequence $(r_k)_k$ in $R(X) \cap U_1$ such that:

\begin{enumerate}
\item[1)] $(r_k)_k$ converges to $x_1;$
\item[2)] $\left(r_{k+1},r_k\right)$ is an internal arc in $X,$ for every $k \in \m{N};$
\item[3)] $r_{k+1} \in \left(r_{k+2},r_k\right) \subset U_1,$ for each $k \in \m{N}.$
\end{enumerate}
Given $i \in \{2,3,\ldots,m\}$ fix an arc $I_i$ in $\cl{U_i}$ which is either external or
internal in $\cl{U_i}.$ Let $J_i = \inte[U_i]{I_i \cap U_i}.$ By Theorems \ref{componentes} and
\ref{e-4}, for every $k \in \m{N},$ the set:
$$
\mathcal{W}_k=
\langle J_2, J_3, \ldots, J_m, (r_{k+1},r_k),(r_{k+2},r_{k+1}),\ldots,(r_{k+n-m+1},r_{k+n-m})\rangle_n
$$
\noindent is a component of $\mathcal{V}_\ve \cap \mathcal{E}_n(X).$ Since $\mathcal{W}_k \cap
\mathcal{W}_l = \emptyset,$ if $k \ne l,$ the set $\mathcal{V}_\ve \cap \mathcal{E}_n(X)$ has
infinitely many components.

To finish the proof, note that $\mathfrak{B} = \{\mathcal{V}_\ve \colon \ve > 0\}$
is a basis of open neighborhoods of $A$ in $F_n(X)$ as required.
\end{proof}

\begin{theorem}\label{peludas}
Let $X$ be a locally connected continuum and $Z$ be a nondegenerate subcontinuum of $X$ such that
$\cl{T(X) \cap Z} = Z.$ Assume that there is a point $p\in Z$ such that $p \in \inte{Z}.$
Then there exists a basis $\mathfrak{B}$ of open neighborhoods of $\{p\}$ in $F_n(X)$ such that, for each
$\mathcal{V} \in \mathfrak{B},$ the set $\mathcal{V} \cap \mathcal{E}_{n}(X)$ is empty.
\end{theorem}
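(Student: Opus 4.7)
The plan is to leverage Theorem \ref{core} directly: any element of $\mathcal{E}_n(X)$ must miss $\cl{T(X)}$, yet the hypothesis $\cl{T(X)\cap Z}=Z$ forces $Z\subset \cl{T(X)}$. So if I can trap a whole basis of neighborhoods of $\{p\}$ inside $Z$, each such neighborhood will consist only of sets contained in $\cl{T(X)}$, and thus none of them can belong to $\mathcal{E}_n(X)$.

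The first step is to fix $\de_0>0$ with $B_X(p,\de_0)\subset \inte{Z}$, which is possible since $p\in \inte{Z}$. Using local connectedness of $X$, for each $\ve\in (0,\de_0)$ I would choose an open connected set $U_\ve$ with $p\in U_\ve \subset B_X(p,\ve)\subset \inte{Z}$. Setting $\mathcal{V}_\ve = \langle U_\ve\rangle_n$, the collection $\mathfrak{B}=\{\mathcal{V}_\ve : 0<\ve<\de_0\}$ is immediately seen to be a basis of open neighborhoods of $\{p\}$ in $F_n(X)$, because the diameters of the $U_\ve$ shrink to $0$.

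The second step is the verification that $\mathcal{V}_\ve \cap \mathcal{E}_n(X)=\emptyset$. Take any $A\in \mathcal{V}_\ve$; then $A$ is nonempty and $A\subset U_\ve\subset Z$. The hypothesis $\cl{T(X)\cap Z}=Z$ yields $Z\subset \cl{T(X)}$, so $A\subset \cl{T(X)}$ and therefore $A\cap \cl{T(X)}=A\neq \emptyset$. By Theorem \ref{core}, no element of $\mathcal{E}_n(X)$ has this property, so $A\notin \mathcal{E}_n(X)$.

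There isn't really a main obstacle here; the content is packed into Theorem \ref{core}, and what remains is just noticing that the density condition $\cl{T(X)\cap Z}=Z$ combined with $p\in\inte{Z}$ lets one build neighborhoods of $\{p\}$ that sit entirely inside $\cl{T(X)}$. The only thing to be mildly careful about is using local connectedness to get \emph{connected} small neighborhoods $U_\ve$ around $p$ (not strictly necessary for the argument, but it makes $\mathcal{V}_\ve$ a standard basic open set); this is routine since $X$ is locally connected.
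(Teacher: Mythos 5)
Your proof is correct and follows essentially the same route as the paper: trap a neighborhood basis of $\{p\}$ inside $F_n(Z)$ using $p\in\inte{Z}$, note that $\cl{T(X)\cap Z}=Z$ forces every such element to meet $\cl{T(X)}$, and invoke Theorem \ref{core}. The only cosmetic difference is that you use basic open sets $\langle U_\ve\rangle_n$ where the paper uses Hausdorff balls $B_{F_n(X)}(\{p\},\delta)$.
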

\begin{proof}
Take $\ve > 0$ such that $B_X(p,\ve) \subset \inte{Z}.$ Let
$$
\mathfrak{B} = \left\{B_{F_n(X)}(\{p\},\delta) \colon \delta < \ve\right\},
$$
\noindent $\mathcal{V} \in \mathfrak{B}$ and $A \in \mathcal{V} \cap F_n(Z).$
Since $\cl{T(X) \cap Z} = Z,$ we have $A \cap \cl{T(X)} \ne \emptyset.$ Thus,
by Theorem \ref{core}, $A \notin \mathcal{E}_{n}(X).$ This implies that $\mathcal{V} \cap
\mathcal{E}_{n}(X) = \emptyset.$
\end{proof}

Let $X$ be a continuum and $A$ be an arc in $X$ with end
points $p$ and $q.$ We say that $A$ is a \emph{free arc of} $X$ if
$A - \{p,q\}$ is an open subset of $X.$

\begin{theorem}\label{arco-libre}
Let $X$ be a locally connected continuum and $n \in \m{N}$ such that
$\mathcal{E}_n(X)$ is dense in $F_n(X).$ Then, for each nonempty open subset
$U$ of $X,$ there is a free arc of $X$ contained in $U.$
\end{theorem}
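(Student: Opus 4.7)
The plan is to use density to locate an $n$-cell neighborhood of an $n$-point set $A \subset U$, embed a basic $n$-cell $\langle J_1, \ldots, J_n\rangle_n$ inside it, and then apply invariance of domain to force the interior of each arc $J_i$ to be open in $X$. Since $\langle U \rangle_n$ is a nonempty open subset of $F_n(X)$ and $\mathcal{E}_n(X)$ is dense, I pick $A_0 \in \mathcal{E}_n(X) \cap \langle U \rangle_n$, with $n$-cell neighborhood $\mathcal{V}_0$ in $F_n(X)$. The intersection of the manifold interior of $\mathcal{V}_0$ (open and dense in $\mathcal{V}_0$) with $\inte[F_n(X)]{\mathcal{V}_0}$ and with $\langle U \rangle_n$ is a nonempty open subset of $F_n(X)$ sitting inside $\langle U \rangle_n$ and homeomorphic to an open subset of $\m{R}^n$; because $X$ has no isolated points, $n$-point subsets are dense in $F_n(X)$, so inside this set I can find $A = \{y_1, \ldots, y_n\}$ with $n$ distinct points and an open Euclidean neighborhood $\mathcal{V} \subset \langle U \rangle_n$ in $F_n(X)$.

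Next, I choose pairwise disjoint connected open sets $V_1, \ldots, V_n \subset U$ with $y_i \in V_i$ and $\langle V_1, \ldots, V_n \rangle_n \subset \mathcal{V}$. Each $V_i$ is an arcwise connected open subset of the locally connected continuum $X$, so I can pick $z_i \in V_i$ distinct from $y_i$ and an arc $J_i \subset V_i$ from $y_i$ to $z_i$. The $J_i$ are pairwise disjoint arcs in $U$, and $\langle J_1, \ldots, J_n\rangle_n \subset \mathcal{V}$. By Theorem \ref{n-celda}, $\mathcal{N} := \langle J_1, \ldots, J_n\rangle_n$ is a closed $n$-cell with manifold interior $\mathcal{M} := \langle J_1 - \{y_1, z_1\}, \ldots, J_n - \{y_n, z_n\}\rangle_n$. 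Applying invariance of domain to the embedding $\mathcal{N} \hookrightarrow \mathcal{V}$ of a closed $n$-cell into an open $n$-manifold, I conclude that $\mathcal{M}$ is open in $\mathcal{V}$, hence open in $F_n(X)$.

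To extract the free arc, I fix $p_i \in J_i - \{y_i, z_i\}$ and put $P = \{p_1, \ldots, p_n\} \in \mathcal{M}$. Openness of $\mathcal{M}$ in $F_n(X)$ gives a basic neighborhood $\langle N_1, \ldots, N_n\rangle_n \subset \mathcal{M}$ of $P$ with $p_i \in N_i$; I shrink the $N_i$ to be pairwise disjoint and disjoint from every $J_j$ with $j \ne i$ (possible since each $J_j$ is closed and $p_i \notin J_j$). For any $w \in N_i$, the set obtained from $P$ by replacing $p_i$ with $w$ lies in $\langle N_1, \ldots, N_n\rangle_n \subset \mathcal{M}$, so $w \in J_j - \{y_j, z_j\}$ for some $j$, and the disjointness forces $j = i$. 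Hence $N_i \subset J_i - \{y_i, z_i\}$, and letting $p_i$ vary shows $J_i - \{y_i, z_i\}$ is open in $X$, making $J_i$ a free arc of $X$ inside $U$. The main obstacle is the clean setup in the first paragraph---in particular, ensuring $A$ has an open Euclidean neighborhood in $F_n(X)$ and that $|A| = n$---which requires carefully combining the density of $\mathcal{E}_n(X)$, the density of the manifold interior of the $n$-cell, and the density of $n$-point sets in $F_n(X)$.
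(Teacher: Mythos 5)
Your proof is correct, and it takes a genuinely different route from the paper's. You argue directly and constructively: density of $\mathcal{E}_n(X)$ gives an element of $\langle U\rangle_n$ with an $n$-cell neighborhood; intersecting its manifold interior with $\inte[F_n(X)]{\mathcal{V}_0}$ and $\langle U\rangle_n$ and then picking an $n$-point set $A$ there produces an open subset $\mathcal{V}$ of $F_n(X)$ inside $\langle U\rangle_n$ that is a boundaryless $n$-manifold; you plant a cell $\langle J_1,\ldots,J_n\rangle_n$ from Theorem \ref{n-celda} inside $\mathcal{V}$, and invariance of domain forces its manifold interior $\langle J_1-\{y_1,z_1\},\ldots,J_n-\{y_n,z_n\}\rangle_n$ to be open in $F_n(X)$; your replace-one-point argument then correctly converts openness of that hyperspace set into openness of each $J_i-\{y_i,z_i\}$ in $X$, so each $J_i$ is a free arc in $U$. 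The paper instead argues by contradiction through triod cores: if $U$ contained no free arc, local arcwise connectedness would make $T(X)$ dense in the closure $Z$ of a suitable open set with $\cl{V}\subset U$, and Theorems \ref{peludas} and \ref{core} (members of $\mathcal{E}_n(X)$ miss $\cl{T(X)}$) would yield a nonempty open subset of $F_n(X)$ disjoint from $\mathcal{E}_n(X)$, contradicting density. Your route pays for an appeal to invariance of domain but is self-contained modulo Theorem \ref{n-celda} and exhibits the free arc explicitly (in fact it shows every arc in a sufficiently small cell-neighborhood pattern is free); the paper's route uses no algebraic topology but leans on the triod-core machinery of Section 4. Both arguments are valid.
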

\begin{proof}
Assume, to the contrary, that $U$ contains no free arcs. Let $V$ be a nonempty open connected subset of $X$ such that 
$\cl{V} \subset U.$ Define $Z = \cl{V}.$ We prove that $Z = \cl{T(X) \cap Z}.$ Let $y \in Z$ and $W$ be an 
open subset of $X$ such that $y \in W.$ Let $p \in W \cap V$ and $A$ be an arc such that 
$p \in A \subset V \cap W.$ Since $U$ has no free arcs and open subsets of $X$ are locally arcwise connected 
(see \cite[Definition 8.24 and Theorem 8.25]{Nadler}) it can be shown that there is $a \in A \cap T(X) \cap W.$ 
Thus $W \cap T(X) \cap Z \ne \emptyset.$ This shows that $Z \subset \cl{T(X) \cap Z}$ and, since the other 
inclusion also holds, we have $\cl{T(X) \cap Z} = Z.$ Since the interior of $Z$ is nonempty, by Theorem \ref{peludas}, 
there is an open set $\mathcal{V}$ in $F_n(X)$ such that $\mathcal{V} \cap \mathcal{E}_{n}(X) = \emptyset.$ This 
contradicts the fact that $\mathcal{E}_n(X)$ is dense in $F_n(X).$ Therefore $U$ contains a free arc.
\end{proof}

\section{The Main Theorem}

We start this section by showing the following result, which is a positive answer to
\cite[Question 2]{illanes}. In its proof we will use the fact that a continuum $Z$ is
locally connected if and only if $F_n(Z)$ is locally connected (\cite[Theorem 6.3]{Char-Ill}),
and also that if $Z$ is a one-dimensional continuum, then $\dim(F_n(Z)) = n.$ This
follows from \cite[Theorem 3]{borsuk} and \cite[Proof of Lemma 3.1]{Curtis}.

\begin{theorem} \label{primer-paso}
Let $X$ be a dendrite and $n \in \m{N}.$ If $Y$ is a continuum such that
$F_n(X) \thickapprox F_n(Y),$ then $Y$ is a dendrite.
\end{theorem}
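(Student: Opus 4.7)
I would verify the three defining properties of a dendrite for $Y$ in turn: local connectedness, one-dimensionality, and the absence of simple closed curves. The first two follow almost immediately from the two facts the paper flags before the statement; the absence of simple closed curves is the heart of the matter.

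For local connectedness and one-dimensionality, I would argue as follows. Since $X$ is a dendrite it is locally connected, so \cite[Theorem 6.3]{Char-Ill} gives that $F_n(X)$ is locally connected; this transports via the homeomorphism to $F_n(Y),$ and the converse direction of the same theorem then yields that $Y$ is locally connected. For dimension, $\dim X = 1$ gives $\dim F_n(X) = n$ by the Borsuk--Curtis dimension computation, hence $\dim F_n(Y) = n.$ If instead $\dim Y \geq 2,$ local connectedness lets me choose $n$ pairwise disjoint nondegenerate subcontinua $K_1,\ldots,K_n$ of $Y,$ each of dimension at least $2;$ a routine adaptation of the product embedding of \thmref{n-celda} (replacing arcs by arbitrary subcontinua) embeds $K_1\times\cdots\times K_n$ into $F_n(Y),$ forcing $\dim F_n(Y)\geq 2n,$ a contradiction. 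Since $Y$ is nondegenerate, this leaves $\dim Y = 1.$

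For the final and most delicate property, I would argue by contradiction: suppose $S \subset Y$ is a simple closed curve. Since $X$ is a dendrite it is contractible, and applying a contraction $H\colon X\times I\to X$ pointwise (i.e., $\tilde H(A,t) = \{H(x,t):x\in A\}$) gives a contraction of $F_n(X),$ so $F_n(X),$ and hence $F_n(Y),$ is contractible. The plan is then to invoke the fact that in any locally connected one-dimensional continuum, every simple closed curve is a retract (intuitively, project each ``branch'' hanging off $S$ onto its point of attachment via a first-hit map). Granting this, functoriality of $F_n$ turns a retraction $Y\to S$ into a retraction $F_n(Y)\to F_n(S),$ so $F_n(S)\approx F_n(S^1)$ would inherit contractibility from $F_n(Y).$ But $F_n(S^1)$ is never contractible (for $n=1$ it is $S^1$ itself; for $n=2$ it is the M\"obius band; for odd $n$ it is $S^n$), yielding the desired contradiction.

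\textbf{Main obstacle.} The nontrivial input is the retract claim for simple closed curves inside a one-dimensional locally connected continuum. Without CW machinery, this requires some care: the ``first-hit'' projection onto $S$ along each branch must be verified to be continuous, which hinges on combining local connectedness and one-dimensionality of $Y$ to control the way infinitely many branches can accumulate on $S.$ This is where the real technical work of the proof sits; everything else is essentially bookkeeping built on the two cited hyperspace facts and the functoriality of $F_n.$
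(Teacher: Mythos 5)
Your overall architecture is exactly the paper's: local connectedness of $Y$ via the Charatonik--Illanes equivalence, $\dim(Y)=1$ by embedding a product into $F_n(Y)$ and comparing with $\dim(F_n(Y))=n,$ and exclusion of simple closed curves by retracting $Y$ onto $S \thickapprox S^1$ (a pure dimension-theoretic fact for $\dim(Y)=1$ --- the paper cites \cite[18.8, p.~104]{Nadler-Dim}, so the ``first-hit map'' continuity issue you flag as the main obstacle is not actually where any work happens), applying $F_n$ to get a retraction onto $F_n(S^1),$ and contradicting Wu's non-contractibility of $F_n(S^1)$ using contractibility of $F_n(X).$ Those two parts are fine.

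The genuine gap is in the dimension step. First, the existence of $n$ pairwise disjoint nondegenerate subcontinua of $Y,$ \emph{each} of dimension at least $2,$ does not follow from local connectedness alone; it requires an argument about where the dimension of $Y$ concentrates (e.g.\ that the set of points of local dimension $\geq 2$ cannot be countable). Second, and more seriously, the inequality $\dim(K_1\times\cdots\times K_n)\geq 2n$ tacitly assumes additivity of covering dimension for products of general compacta, which is false: Pontryagin's surfaces are $2$-dimensional compacta whose product has dimension $3,$ so a product of $n$ two-dimensional compacta can have dimension far below $2n.$ Your subcontinua $K_i$ are not known to contain arcs (a nondegenerate subcontinuum of a locally connected continuum need not be locally connected), so you cannot rescue the bound by passing to cells inside the $K_i.$ The paper's argument is arranged precisely to dodge this: it takes \emph{one} compact set $B$ with $\dim(B)\geq 2$ and $n-1$ pairwise disjoint \emph{arcs} $A_1,\ldots,A_{n-1}$ disjoint from $B$ (arcs exist in any nonempty open subset of a locally connected continuum), and then uses the classical product theorem $\dim(B\times A_1\times\cdots\times A_{n-1})=\dim(B)+n-1\geq n+1,$ valid because all but one factor is a cell. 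Replacing your $K_2,\ldots,K_n$ by arcs and keeping a single $2$-dimensional compact neighborhood repairs the argument and is exactly what the paper does.
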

\begin{proof}
Since $X$ is locally connected, $Y$ is also locally connected. By \cite[Theorem 1.1(19)]{Chara2},
$\dim(X) = 1.$ Thus $\dim(F_n(Y)) = \dim(F_n(X)) = n.$ Assume that $\dim(Y) > 1.$ Then there
exist $q \in Y$ and a compact neighborhood $B$ of $q$ such that $\dim(B) \geq 2.$ Such $B$
can be chosen so that there is a finite collection $A_1,A_2,\ldots,A_{n-1}$ of pairwise disjoint
arcs in $Y$ such that $B \cap A_i = \emptyset,$ for all $i \in \{1,2,\ldots,n-1\}.$ Then $\mathcal{B}=
\langle B,A_1,A_2,\ldots,A_{n-1}\rangle_n$ is a subset  of $F_n(Y)$ which is homeomorphic to
$B \times A_1 \times A_2 \times \cdots \times A_{n-1}.$ Since $B$ is compact and $\dim(A_i) = 1$
for every $i \in \{1,2,\ldots,n-1\},$ by \cite[Remark, p. 34]{Hurewicz}, $\dim(\mathcal{B}) =
\dim(B \times A_1 \times A_2 \times \cdots \times A_{n-1}) = \dim(B) + n -1 \geq n +1.$ Hence
$\dim(F_n(Y)) \geq n + 1.$ Since this is a contradiction, $\dim(Y) = 1.$

Assume that $Y$ contains a simple closed curve $S^1.$ Since $\dim(Y) = 1$, by
\cite[18.8, p. 104]{Nadler-Dim}, there is a retraction $\fun{r}{Y}{S^1}.$ Consider the function
$\fun{R}{F_n(Y)}{F_n\left(S^1\right)}$ defined, for $A \in F_n(Y),$ by $R(A) = r(A).$
It is not difficult to see that $R$ is a well defined retraction. Since $X$ is contractible
(\cite[Theorem 1.2(21)]{Chara2}), $F_n(X)$ is contractible. Thus $F_n\left(S^1\right)$ is a retract
of the contractible space $F_n(Y),$ so $F_n\left(S^1\right)$ is contractible as well
(\cite[Theorem 13.2]{borsuk2}). However, in \cite{Wu} it is shown that there is no $n \in \m{N}$ so
that $F_n(S^1)$ is contractible. This contradiction shows that $Y$ does not contain a simple closed
curve. We conclude that $Y$ is a dendrite.
\end{proof}

Let $X$ be a dendrite and $K$ be a subcontinuum of $X.$ Define $\fun{r}{X}{K}$ as follows: $r(x) = x$ if
$x \in K$ and, otherwise, $r(x)$ is the unique point in $K$ such that $r(x)$ is a point of every arc
in $X$ from $x$ to any point of $K$ (see \cite[Lemma 10.24]{Nadler}). In \cite[Lemma 10.25]{Nadler} it is
shown that $r$ is a retraction. Such function is called the \emph{first point map for} $K.$ We use this
function in the proof of the following result.

\begin{theorem} \label{main}
Let $X \in \mathcal{D}$ and $n \in \m{N}.$ If $Y$ is a continuum such that
$F_n(X) \thickapprox F_n(Y),$ then $Y \in \mathcal{D}.$
\end{theorem}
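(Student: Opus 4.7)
By Theorem \ref{primer-paso} we already know that $Y$ is a dendrite, so the task reduces to showing that $E(Y)$ is closed in $Y$. The plan is by contradiction: assume there exists $y_0 \in \cl[Y]{E(Y)} \setminus E(Y)$ and fix a sequence $(e_k)_k$ of distinct endpoints of $Y$ with $e_k \to y_0$. An ordinary point of a dendrite has an open arc neighborhood in which every point has order $2$ in $Y$, and no such neighborhood contains endpoints; hence $y_0$ cannot be ordinary, and so $y_0 \in R(Y)$. I then transfer density: by Theorem \ref{e-0}, $\mathcal{E}_n(Y) = h(\mathcal{E}_n(X))$, so Theorem \ref{en-denso} gives $\mathcal{E}_n(Y)$ dense in $F_n(Y)$, and by Theorem \ref{arco-libre} every nonempty open subset of $Y$ contains a free arc of $Y$.

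The central step exploits the first point map introduced in the paragraph preceding the theorem. Choose a subtree $K$ of $Y$ adapted to the accumulation at $y_0$, for instance a star $K = [y_0, z_1] \cup \cdots \cup [y_0, z_m]$ whose arcs lie in several distinct branches of $Y$ at $y_0$ and intercept the tail of the sequence $([y_0, e_k])_k$. The first point map $\rho \colon Y \to K$ is a retraction, and $P \colon F_n(Y) \to F_n(K)$ defined by $P(A)=\rho(A)$ is a retraction of $F_n(Y)$ onto $F_n(K)$; hence $F_n(K)$ is a retract of $F_n(Y) \thickapprox F_n(X)$.

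Next, set $B_0 = \{y_0\} \in F_n(Y)$ and $A_0 = h^{-1}(B_0) \in F_n(X)$. Using that infinitely many distinct branches of $Y$ at $y_0$ accumulate at $y_0$ (a consequence of $e_k \to y_0$ together with $y_0 \in R(Y)$), one shows directly that every open neighborhood $\mathcal{W}$ of $B_0$ in $F_n(Y)$ has $\mathcal{W} \cap \mathcal{E}_n(Y)$ with infinitely many connected components, labeled by the branches at $y_0$ occupied by the points of the elements of $\mathcal{E}_n(Y)$. Transferring via $h$ and applying the contrapositive of Theorem \ref{importante1} forces $A_0 \cap E_a(X) \ne \emptyset$; pick $x^* \in A_0 \cap E_a(X)$. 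Since $x^* \in E(X)$, Theorem \ref{vecindades contraibles} applies to $A_0$, producing a neighborhood basis of $A_0$ with each $\mathcal{V} - \{A_0\}$ contractible, which transfers via $h$ to an analogous basis of $B_0$ in $F_n(Y)$.

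The final contradiction---the step I expect to be the main obstacle---is to exhibit incompatibility between this transferred contractibility at $B_0$ and the topology of $F_n(Y)$ near $B_0$ as seen through the retract $F_n(K)$. The technical subtlety is that ramification points of $Y$ need not accumulate at $y_0$ (consider the $\omega$-od, where $R(Y)=\{y_0\}$), so Theorem \ref{peludas} cannot be applied directly to $y_0$; the first point retraction onto the star $K$ is what converts the accumulation of endpoints at $y_0$ into a combinatorial/topological obstruction strong enough to close the contradiction and complete the proof.
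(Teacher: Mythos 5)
Your reduction to ``$E(Y)$ is closed'' via Theorem \ref{primer-paso}, the dichotomy on whether $h^{-1}(B)\cap E_a(X)$ is empty, and the use of Theorems \ref{e-0}, \ref{en-denso}, \ref{arco-libre}, \ref{importante1} and \ref{vecindades contraibles} all match the paper's strategy, but there are two genuine gaps. First, the assertion that $y_0\in R(Y)$ is false: an ordinary point of a dendrite need not have a neighborhood consisting of ordinary points. In the harmonic comb $W$ the point $(0,0)$ has order $2$ and yet is a limit of end points; the offending branches then hang from ramification points $(-\frac{1}{m},0)$ accumulating at $y_0$, not from $y_0$ itself, so there are \emph{not} infinitely many branches of $Y$ at $y_0$. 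The paper invokes \cite[Theorem 3.3]{Paty} to split into the two cases $F_\omega\subset Y$ and $W\subset Y$, and proves its Claim 1 (every small neighborhood of $B$ meets $\mathcal{E}_n(Y)$ in infinitely many components) in both, taking the free arcs inside components of $Y-\{(-\frac{1}{m},0)\}$ in the comb case. Your justification ``labeled by the branches at $y_0$'' only covers the $F_\omega$ case.

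Second, and more seriously, the final contradiction is not supplied, and your choice of $B_0=\{y_0\}$ as a singleton blocks the argument that actually closes it. The paper enlarges the singleton to an $n$-point set $B=\{x_1,\ldots,x_n\}$ with $x_2,\ldots,x_n$ ordinary points of $Y$ on the arc $A=[(-1,0),(1,0)]$, and builds a retraction $r\colon Y\to A$ with $r^{-1}(x_i)=\{x_i\}$ for every $i$. The induced retraction $R\colon F_n(Y)\to F_n(A)$ then satisfies $R^{-1}(B)=\{B\}$ and sends the basic neighborhoods $\langle U_1^\ve,\ldots,U_n^\ve\rangle_n$ onto open subsets of $F_n(A)$ homeomorphic to $\m{R}^n$ in which $B$ is an interior manifold point; contractibility of $\mathcal{V}-\{B\}$ forces contractibility of $R(\mathcal{V})-\{B\}$, which retracts onto the boundary $(n-1)$-sphere of an $n$-cell around $B$ --- a contradiction. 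With the singleton $\{y_0\}$ this fails: $R(\langle U_1\rangle_n)=F_n(r(U_1))$ is $F_n$ of an arc and $\{y_0\}$ lies in its $F_1$-part, i.e.\ on the boundary of that cell, where deleted neighborhoods \emph{are} contractible, so no obstruction appears. You correctly flagged this step as the main obstacle; the missing idea is precisely the augmentation of $y_0$ by $n-1$ ordinary points together with a retraction onto an arc having singleton fibers over each point of $B$.
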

\begin{proof}
Since $X \thickapprox F_1(X),$ the result is true for $n = 1,$ so we consider that
$n \geq 2.$ By Theorem \ref{primer-paso}, $Y$ is a dendrite. Let us assume that the metric $d$ for
$Y$ is convex. If $p,q \in \m{R}^2,$ we denote by $[p,q]$ the straight line segment in $\m{R}^2$ joining $p$ and $q.$
We consider that $(p,q) = [p,q] - \{p,q\}.$ 

Assume, to the contrary, that $Y \notin \mathcal{D}.$ Then, by \cite[Theorem 3.3]{Paty}, $Y$ contains either a copy of
$$
F_\omega = [(-1,0),(1,0)] \cup \left(\bigcup_{m=1}^\infty \left[(0,0),\left(\frac{1}{m},\frac{1}{m^2}\right)\right]\right)
$$
\noindent or of
$$
W = [(-1,0),(1,0)] \cup \left( \bigcup_{m=1}^\infty \left[\left(-\frac{1}{m},0\right),\left(-\frac{1}{m},\frac{1}{m}\right)\right]\right).
$$
To simplify notation let us assume that either $F_\omega \subset Y$ or $W \subset Y.$ Note that
$(0,0) \in \cl[Y]{E(Y)} - E(Y).$ Let $x_1 = (0,0).$ Since $O(Y)$ is dense in $Y,$ we can take $n - 1$
points $x_2,x_3\ldots,x_n$ in $O(Y) \cap ((0,0),(1,0)).$ Let 
$$
B = \{x_1,x_2,\ldots,x_n\}.
$$
Let $\fun{h}{F_n(X)}{F_n(Y)}$ be a homeomorphism. We will proceed as follows: after proving Claim 1, we 
consider the cases $h^{-1}(B) \cap E_a(X) = \emptyset$ and $h^{-1}(B) \cap E_a(X) \ne \emptyset.$ In both 
situations we will find a contradiction. Thus the assumption $Y \notin \mathcal{D}$ is not correct and, in
this way, the proof of the theorem will be complete.

\vskip .3cm

By Theorems \ref{e-0} and \ref{en-denso}, $h(\mathcal{E}_n(X)) = \mathcal{E}_n(Y)$ and $\mathcal{E}_n(Y)$ is 
dense in $F_n(Y).$

\vskip .2cm

Take $\delta > 0$ such that ${B_Y(x_i,\delta)} \cap {B_Y(x_j,\delta)} = \emptyset$ 
for each $i,j \in \{1,2,\ldots,n\}$ with $i \ne j.$ 

\vskip .3cm

\textbf{Claim 1}. For each open neighborhood $\mathcal{V}$ of $B$ in $F_n(Y)$ with $\mathcal{V} \subset
B_{F_n(Y)}(B,\delta),$ the set $\mathcal{V} \cap \mathcal{E}_n(Y)$ has infinitely many components.

\vskip .3cm

To show Claim 1, let $\mathcal{V}$ be an open neighborhood of $B$ in $F_n(Y)$ such that $\mathcal{V} \subset
B_{F_n(Y)}(B,\delta).$ Let $0 <\ve < \delta$ be such that the sets $B_{Y}\left(x_1,\ve\right),$
$B_{Y}\left(x_2,\ve\right), \ldots, B_{Y}\left(x_n,\ve\right)$ are pairwise disjoint and
$$
\langle B_{Y}\left(x_1,\ve\right),B_{Y}\left(x_2,\ve\right),\ldots,
B_{Y}\left(x_n,\ve\right)\rangle_n \subset \mathcal{V}.
$$
Since $x_1 \in B_{Y}\left(x_1,\ve\right)$ and either $F_\omega \subset Y$ or $W \subset Y,$ there
exists $N \in \m{N}$ such that either
\begin{equation} \label{chase1}
\bigcup_{m=N}^\infty \left[(0,0),\left(\frac{1}{m},\frac{1}{m^2}\right)\right]
\subset B_{Y}\left(x_1,\ve\right)
\end{equation}
\noindent or
\begin{equation} \label{chase2}
\bigcup_{m=N}^\infty \left[\left(-\frac{1}{m},0\right),\left(-\frac{1}{m},\frac{1}{m}\right)\right]
\subset B_{Y}\left(x_1,\ve\right).
\end{equation}

Also, since $Y$ is a dendrite, $x_1 \in \cl[Y]{E(Y)} - E(Y)$ and, according the case, the sequences
$\left(\left(\frac{1}{m},\frac{1}{m^2}\right)\right)_m$ or
$\left(\left(-\frac{1}{m},0\right)\right)_m$ and $\left(\left(-\frac{1}{m},\frac{1}{m}\right)\right)_m$
converge to $x_1,$ we can take $N$ so that, for every $m \geq N,$ if (\ref{chase1}) holds then the component
of $B_{Y}\left(x_1,\ve\right) - \{x_1\}$ that contains $\left(\frac{1}{m},\frac{1}{m^2}\right)$
coincides with the component of $Y - \{x_1\}$ that contains $\left(\frac{1}{m},\frac{1}{m^2}\right);$ and
if (\ref{chase2}) holds, then the component of $B_{Y}\left(x_1,\ve\right) -
\left\{\left(-\frac{1}{m},0\right)\right\}$ that contains $\left(-\frac{1}{m},\frac{1}{m}\right)$ coincides
with the component of  $Y - \left\{\left(-\frac{1}{m},0\right)\right\}$ that contains
$\left(-\frac{1}{m},\frac{1}{m}\right).$

Given $m \geq N$ we define $Z_m$ as follows: if (\ref{chase1}) holds, then $Z_m$ is the
component of $Y - \{x_1\}$ that contains $\left(\frac{1}{m},\frac{1}{m^2}\right)$ and,
if (\ref{chase2}) holds, then $Z_m$ is the component of $Y - \left\{\left(-\frac{1}{m},0\right)\right\}$
that contains $\left(-\frac{1}{m},\frac{1}{m}\right).$ Since each $Z_m$ is open in $Y$ and $\mathcal{E}_n(Y)$ is
dense in $F_n(Y),$ by Theorem \ref{arco-libre}, there is a free arc $A_m$ of $Y$ contained in $Z_m.$ Note that
$\{\inte[Y]{A_m} \colon m \geq N\}$ is a sequence of pairwise disjoint open connected subsets of
$B_{Y}\left(x_1,\ve\right).$

Given $i \in \{2,3,\ldots,n\},$ since $B_{Y}\left(x_i,\ve\right)$ is open in $Y$ and $\mathcal{E}_n(Y)$
is dense in $F_n(Y),$ by Theorem \ref{arco-libre}, there is a free arc $J_i$ of $Y$ contained in
$B_{Y}\left(x_i,\ve\right).$ Note that $\inte[Y]{J_2},\ldots,\inte[Y]{J_n}$ is a finite
sequence of pairwise disjoint open connected subsets of $Y.$

For $m \geq N$ define
$$
\mathcal{A}_m = \langle \inte[Y]{A_m}, \inte[Y]{J_2},\ldots, \inte[Y]{J_n}\rangle_n.
$$
Since $\inte[Y]{A_m}, \inte[Y]{J_2},\ldots, \inte[Y]{J_n}$ are open connected subsets of $Y,$
by Theorem \ref{n-celda}, $\mathcal{A}_m$ is an open connected subset of $F_n(Y).$ Since
$\inte[Y]{A_m} \cap \inte[Y]{A_k} = \emptyset$ if $m \ne k,$ we have $\mathcal{A}_m \cap \mathcal{A}_k =
\emptyset.$ Given $C \in \mathcal{A}_m,$ by Theorem \ref{n-celda}, the set $\langle A_m, J_2,\ldots,
J_n\rangle_n$ is an $n$-cell in $F_n(Y)$ that contains $C$ in its interior. Thus $C \in \mathcal{E}_n(Y),$ so
$\mathcal{A}_m \subset \mathcal{E}_n(Y).$ Moreover, we have
$$
\mathcal{A}_m \subset \langle B_{Y}\left(x_1,\ve\right),B_{Y}\left(x_2,\ve\right),\ldots,
B_{Y}\left(x_n,\ve\right)\rangle_n \subset \mathcal{V},
$$
\noindent so $\mathcal{A}_m \subset \mathcal{V} \cap \mathcal{E}_n(Y).$ 

Let $\mathcal{B}_m$ be the component of $\mathcal{V} \cap \mathcal{E}_n(Y)$ that contains
$\mathcal{A}_m.$ We claim that $\mathcal{B}_m \cap \mathcal{B}_k = \emptyset$ for different
$m,k \geq N.$ Assume, to the contrary, that $\mathcal{B}_m = \mathcal{B}_k.$ Let $D_1 \in \mathcal{A}_m$ 
and $D_2 \in \mathcal{A}_k.$ Since $F_n(Y)$ is locally connected and $\mathcal{B}_m$ is a component of the 
open subset $\mathcal{V} \cap \mathcal{E}_n(Y)$ of $F_n(Y),$ the set $\mathcal{B}_m$ is arcwise connected. 
Then there is an arc $\fun{\alpha}{[0,1]}{\mathcal{B}_m}$ such that $\alpha(0) = D_1$ and $\alpha(1) = D_2.$ Let
$$
K = \bigcup \{\alpha(t) \colon t \in [0,1]\}.
$$
Given $j \in \{1,2,\ldots,n\},$ let $K_j = K \cap B_Y(x_j,\delta).$ Since $\alpha([0,1])$ is connected in
$F_n(Y),$ the subset $K$ of $Y$ has at most $n$ components (\cite[Lemma 6.1]{Char-Ill}). Since
$D_1,D_2 \subset K,$ the sets $B_Y(x_1,\delta),B_Y(x_2,\delta),\ldots, B_Y(x_n,\delta)$
are pairwise disjoint and $D_i \cap B_Y(x_j,\delta) \ne \emptyset,$ for each $i \in \{1,2\}$ and every 
$j \in \{1,2,\ldots,n\},$ it follows that $K_1,K_2,\ldots,K_n$
are the components of $K.$ Note that $K_1 \cap A_m \ne \emptyset$ and $K_1 \cap A_k \ne \emptyset$ so, if
(\ref{chase1}) holds, then $x_1 \in K_1$ and, if (\ref{chase2}) holds, then $\left(-\frac{1}{m},0\right) \in K_1.$
This implies that $K \cap R(Y) \ne \emptyset,$ so one element of $\mathcal{B}_m$ contains a ramification point
of $Y.$ This contradicts Theorem \ref{core}. Hence $\mathcal{B}_m \cap \mathcal{B}_k = \emptyset.$ 

Therefore $\mathcal{V} \cap \mathcal{E}_n(Y)$ has infinitely many components. This completes the
proof of Claim 1.

\vskip .3cm

Let us assume that $h^{-1}(B) \cap E_a(X) = \emptyset.$ Then, by Theorem \ref{importante1},
there exists a basis $\mathfrak{B}_X$ of open neighborhoods of $h^{-1}(B)$ in $F_n(X)$ such
that, for each $\mathcal{U} \in \mathfrak{B}_X$, the set $\mathcal{U} \cap \mathcal{E}_n(X)$
is nonempty and has a finite number of components. Let $\mathfrak{B}_Y = \{h(\mathcal{U}) \colon
\mathcal{U} \in \mathfrak{B}_X\}.$ Then $\mathfrak{B}_Y$ is a basis of open neighborhoods of $B$
in $F_n(Y)$ such that, for each $\mathcal{V} \in \mathfrak{B}_Y,$ the set $\mathcal{V} \cap \mathcal{E}_n(Y)$
is nonempty and has a finite number of components. Let $\mathcal{V} \in \mathfrak{B}_Y$ such that
$\mathcal{V} \subset B_{F_n(Y)}(B,\delta).$ By Claim 1) the set $\mathcal{V} \cap \mathcal{E}_n(Y)$ has infinitely
many components. This is a contradiction. 

\vskip .3cm

Let us assume now that $h^{-1}(B) \cap E_a(X) \ne \emptyset.$ Then,
by Theorem \ref{vecindades contraibles}, there is a basis $\mathfrak{B}$ of open neighborhoods of
$h^{-1}(B)$ in $F_n(X)$ such that, for each $\mathcal{U} \in \mathfrak{B},$ the set
$\mathcal{U} - \{h^{-1}(B)\}$ is contractible. Let
$\mathfrak{C} = \{h(\mathcal{U}) \colon \mathcal{U} \in \mathfrak{B}\}.$ Then
$\mathfrak{C}$ is a basis of open neighborhoods of $B$ in $F_n(Y)$ such that, for each
$\mathcal{V} \in \mathfrak{C},$ the set $\mathcal{V} - \{B\}$ is contractible.

Let $A = [(-1,0),(1,0)].$ Note that $A$ is an arc in $Y$ such that $x_1=(0,0) \in ((-1,0),(1,0)).$

\vskip .3cm
\textbf{Claim 2}. There is a retraction $\fun{r}{Y}{A}$ such that $r^{-1}(x_i) = \{x_i\},$ for each
$i \in \{1,2,\ldots,n\}.$
\vskip .3cm

To show Claim 2, let $\fun{r_1}{Y}{A}$ be the first point map for $A.$ By \cite[Lemma 10.25]{Nadler},
$r_1$ is a retraction. Given $i \in \{2,3,\ldots,n\},$ since $x_i \in O(Y),$ we have
$r_1^{-1}(x_i) = \{x_i\}.$ If $x_1 \in O(Y),$ then $r_1^{-1}(x_1) = \{x_1\}$ and $r_1$ has
the required properties. If $x_1 \notin O(Y),$ then $r_1^{-1}(x_1) = \left\{y \in Y \colon
[y,x_1] \cap A = \{x_1\}\right\}.$ Let $A_0 = [(-1,0),(0,0)].$ Given $y \in r_1^{-1}(x_1),$ if
$d(x_1,y) \leq d(x_1,(-1,0)),$ there is a unique $z_y \in A_0$ such that $d(x_1,z_y) = d(x_1,y).$ Then
we can define a function $\fun{r_2}{r_1^{-1}(x_1)}{A_0}$ so that:
$$
r_2(y) =
\left\{
  \begin{array}{rr}
    z_y, & \hbox{ if } d(x_1,y) \leq d(x_1,(-1,0)); \\
    (-1,0), & \hbox{ if } d(x_1,y) \geq d(x_1,(-1,0)).
  \end{array}
\right.
$$
It is not difficult to see that $r_2$ is a well defined continuous function such
that $r_2^{-1}(x_1) = x_1.$ Now define $\fun{r}{Y}{A}$ so that, if $y \in Y,$ then:
$$
r(y) =
\left\{
  \begin{array}{ll}
    r_1(y), & \hbox{ if } y \notin r_1^{-1}(x_1); \\
    r_2(y), & \hbox{ if } y \in r_1^{-1}(x_1).
  \end{array}
\right.
$$

Then $r$ is a retraction such that $r^{-1}(y)=\{x_{i}\},$ for each $i\in
\{1,2,...,n\}.$ This proves Claim 2.

\vskip .3cm

Let $\fun{r}{Y}{A}$ as in Claim 2. Define $\fun{R}{F_n(Y)}{F_n(A)},$ at $D \in F_n(Y),$ by
$R(D) = r(D).$ Then $R$ is a retraction such that $R^{-1}(B) = \{B\}.$ For each $\ve > 0$ with $\ve<\delta$, 
let $U_i^\ve = B_Y(x_i,\ve)$ for $i \in \{1,2,\ldots,n\}$ and 
$\mathcal{U}^{\ve} = \left\langle U_1^{\ve},U_2^{\ve},\ldots,U_n^{\ve}\right\rangle_n$.

\vskip .3cm
\textbf{Claim 3}. For each $\ve>0$ with $\ve<\delta$, the set $R(\mathcal{U}^\ve)$ is a connected
open subset of $F_n(A)$ homeomorphic to the Euclidean space $\m{R}^n$.
\vskip .3cm

Considering the sets $U_1^{\ve},U_2^{\ve},\ldots,U_n^{\ve}$ are pairwise disjoint, it is not
difficult to prove that
$R(\mathcal{U}^{\ve}) = \left\langle r(U_1^{\ve}),r(U_2^{\ve}),\ldots,r(U_n^{\ve})\right\rangle_n.$
Since the metric for $Y$ is convex, by the definition of $r,$ $r(U_1^{\ve}),r(U_2^{\ve}),\ldots,
r(U_n^{\ve})$ are open connected subsets of $A.$ Thus $R(\mathcal{U}^{\ve})$ is an open connected
subset of $F_n(A).$ Moreover, since $A$ is an arc, by Theorem \ref{n-celda},
$R(\mathcal{U}^{\ve})$ is homeomorphic to $\m{R}^n.$ This proves Claim 3.

\vskip .3cm

Now we are ready to show the final argument. Fix $\gamma>0$ with $\gamma<\delta$ and
take $\mathcal{V} \in \mathfrak{C}$ such that $B \in \mathcal{V} \subset \mathcal{U}^{\gamma}.$
Now let $\sigma>0$ such that $B \in \mathcal{U}^{\sigma} \subset \mathcal{V} \subset \mathcal{U}^{\gamma}.$ Since
$R$ is a retraction, $\mathcal{V} - \{B\}$ is contractible and $R(\mathcal{V} - \{B\}) = R(\mathcal{V}) - \{B\},$
then the set $R(\mathcal{V}) - \{B\}$ is contractible.

Since $B \in \mathcal{U}^{\sigma}\subset \mathcal{V} \subset \mathcal{U}^{\gamma},$ by the definition of $R,$
$B \in R(\mathcal{U}^{\sigma}) \subset R(\mathcal{V}) \subset R(\mathcal{U}^{\gamma}).$
Thus, by Claim 3, $R(\mathcal{U}^{\sigma})$ is an open neighborhood of $B$ homeomorphic to $\m{R}^n$
and contained in the set $R(\mathcal{V})$. Then there exists an $n$-cell $G$ such that
$B \in G \subset R(\mathcal{V})$ and $B \notin \partial G,$ where $\partial G$ is the manifold boundary of $G$. By Claim 3, 
$R(\mathcal{U}^{\gamma})$ is also homeomorphic to $\m{R}^n$ so there is a retraction
$\fun{S}{R(\mathcal{U}^{\gamma}) - \{B\}}{\partial G}.$ Then
$\fun{S_{|R(\mathcal{V}) - \{B\}}}{R(\mathcal{V}) - \{B\}}{\partial G}$ is also a
retraction. Since $R(\mathcal{V}) - \{B\}$ is contractible, the set $\partial G$ is
contractible. This is a contradiction to \cite[p. 37]{Hurewicz} that came from
the assumption that $h^{-1}(B) \cap E_a(X) \ne \emptyset.$

Since both cases $h^{-1}(B) \cap E_a(X) = \emptyset$ and $h^{-1}(B) \cap E_a(X) \ne \emptyset$
produced a contradiction, the assumtion that $Y \notin \mathcal{D}$ is not correct. Therefore 
$Y \in \mathcal{D}.$
\end{proof}
\textbf{Acknowledgement. }We would like to thank professor Alejandro Illanes
for his suggestions for this paper. We would also like to thank the referee
for his/her fruitful comments for the improvement of this paper.

\end{document}